\documentclass{article}

\usepackage{amsfonts}
\usepackage{amsthm}
\usepackage{amsmath}
\usepackage{amssymb}
\usepackage{mathtools} 
\usepackage{authblk} 
\usepackage{xcolor}

\usepackage[top=1in, bottom=1in, left=1in, right=1in]{geometry}

\newtheorem{theorem}{Theorem}[section]
\newtheorem{lemma}{Lemma}[section]

\newtheorem{proposition}{Proposition}[section]

\theoremstyle{definition}
\newtheorem{definition}{Definition}[section]

\theoremstyle{remark}
\newtheorem{remark}{Remark}[section]
\newtheorem{example}{Example}[section]

\begin{document}

\title{On a new class of fractional difference-sum operators based on discrete Atangana--Baleanu sums}

\date{}
 
\author[1]{Thabet Abdeljawad}
\author[2,3]{Arran Fernandez}

\affil[1]{{\small Department of Mathematics and General Sciences, Prince Sultan University, P. O. Box 66833, Riyadh 11586, Saudi Arabia}}
\affil[2]{{\small Department of Applied Mathematics and Theoretical Physics, University of Cambridge, Wilberforce Road, Cambridge, CB3 0WA, Cambridge, United Kingdom}}
\affil[3]{{\small Department of Mathematics, Faculty of Arts and Sciences, Eastern Mediterranean University, Famagusta, TRNC, Mersin-10, Turkey}}

\maketitle


\begin{abstract}
We formulate a new class of fractional difference and sum operators, study their fundamental properties, and find their discrete Laplace transforms. The method depends on iterating the fractional sum operators corresponding to fractional differences with discrete Mittag-Leffler kernels. The iteration process depends on the binomial theorem. We note in particular the fact that the iterated fractional sums have a certain semigroup property and hence the new introduced iterated fractional difference-sum operators have this semigroup property as well.
\end{abstract}


\section{Introduction and preliminaries} \label{s:1}

Discrete fractional calculus is an important emerging branch of analysis \cite{Miller,Goodrich,holm,Ferd,Feri,Suwan,Thbinomial}, which has been very useful in the analysis of discrete systems with non-local effects. The solution of discrete fractional differential equations and discrete boundary value problems has discovered many applications \cite{Atmodel}, and so this is an important field to develop in terms of mathematical theory.

There are many different types of fractional calculus which can be defined, in both the continuous and discrete contexts. The point of such an exercise is to discover new ways of modelling various fractional systems, and to create new frameworks which can then be used in a number of applications \cite{dumitru,Martin}. It is important to continue developing these new models, from several points of view.

In the last two years, some important new definitions of fractional calculus have been formulated with exponential and Mittag-Leffler kernels \cite{FCaputo,Abdon}, and their properties have been explored in a number of papers \cite{baleanu-mousalou-rezapour1,baleanu-fernandez,al-refai,fernandez-baleanu1}. Furthermore, discrete fractional calculus has been theoretically developed more by formulating and analysing discrete versions of these fractional operators \cite{abdeljawad-baleanu,TQ CAM 2018,TD ADE 2016}. The idea of our approach in this article depends on iterating the fractional sums corresponding to fractional operators with discrete Mittag-Leffler kernels, extending and generalising these operators in such a way as to recover certain desirable traits such as a semigroup property. The idea of considering iterations of functional operators to get fractional ones is not new -- indeed, it is the very basis of fractional calculus itself \cite{miller-ross,oldham-spanier} -- but it has recently been explored as a way of adding further fractionalisation to operators which are already considered as fractional \cite{jarad,fernandez-baleanu,fernandez-baleanu-srivastava}. The present work can be seen as an extension of these projects into the discrete context. The advantage of the discrete operators compared to the existing ones is the same as in any branch of discrete calculus: it is useful to have definitions in both discrete and continuous contexts, since modelling different processes in the real world requires both discrete and continuous models \cite{Goodrich,holm,erbe-goodrich-jia-peterson,atici-uyanik,herrmann}.

The structure of this work is as follows. In the current Section \ref{s:1}, we review some basic concepts about discrete fractional calculus in the frame of nabla difference analysis, including the well-known Atangana--Baleanu (AB) model of discrete fractional calculus and its fundamental properties. In Section \ref{s:2}, we define our new family of operators and analyse them, proving some essential facts about them. In Section \ref{s:3}, we consider some fractional difference equations in this new model. Finally, in Section \ref{s:4} we conclude the paper.

\subsection{Nabla discrete calculus}

\begin{definition} \label{rising}
(i) For any $l\in\mathbb{N}$ and any number $z$, the $l$ rising factorial of $z$ is
\begin{equation}\label{rising 1}
    z^{\overline{l}}= \prod_{i=0}^{l-1}(z+i),~~~z^{\overline{0}}=1.
\end{equation}

(ii) For any $\mu\in\mathbb{R}$, the $\mu$ rising function is
\begin{equation}\label{alpharising}
 z^{\overline{\mu}}=\frac{\Gamma(z+\mu)}{\Gamma(z)},~~~z \in \mathbb{R}\setminus \{...,-2,-1,0\},~~0^{\overline{\mu}}=0
\end{equation}

\end{definition}

The following fact is straightforward to prove:
 \begin{equation}\label{oper}
    \nabla (z^{\overline{\mu}})=\mu z^{\overline{\mu-1}},
\end{equation}
and this means $z^{\overline{\mu}}$ is an increasing function on $\mathbb{N}_0$.

\indent

\begin{definition}[Nabla fractional sums -- see \cite{dualCaputo, dualR}]
\label{fractional sums}
Define the operator $\rho(t)=t-1$, which is called the backwards jump. For any function $f:\mathbb{N}_a:=\{a,a+1,a+2,..\}\rightarrow \mathbb{R}$,
the nabla fractional sum of order $\mu>0$ and of left type starting from $a$ is defined by:
\begin{equation*}
\prescript{}{a}\nabla^{-\mu} f(z)=\frac{1}{\Gamma(\mu)}
\sum_{s=a+1}^z(z-\rho(s))^{\overline{\mu-1}}f(s),
\quad z \in \mathbb{N}_{a+1}.
\end{equation*}
Similarly, for a function $f:~_{b}\mathbb{N}:=\{b,b-1,b-2,..\}\rightarrow \mathbb{R}$, the nabla fractional sum of order $\mu>0$ and of right type finishing at $b$ is defined by:
\begin{align*}
 \nabla_b^{-\mu} f(z) &= \frac{1}{\Gamma(\mu)}\sum_{s=z}^{b-1}(s-\rho(z))^{\overline{\mu-1}}f(s) \\
   &= \frac{1}{\Gamma(\mu)}\sum_{s=z}^{b-1}(\sigma(s)-z)^{\overline{\mu-1}}f(s),\quad z \in {_{b-1}\mathbb{N}}.
\end{align*}

\end{definition}

\begin{lemma}[\cite{dualR,ThFer}]  \label{power nabla left and right}
Let $\alpha>0,~\beta>-1,~h>0$. Then we have the following identities:
\begin{align}
\label{pdl1} \prescript{}{a}\nabla^{-\alpha} (t-a)^{\overline{\beta}} &= \frac{\Gamma(\beta+1)}{\Gamma(\beta+1+\alpha)}(t-a)^{\overline{\alpha+\beta}} \\
\label{pdr111} \nabla_{b}^{-\alpha} (b-t)^{\overline{\beta}} &= \frac{\Gamma(\beta+1)}{\Gamma(\beta+1+\alpha)}(b-t)^{\overline{\alpha+\beta}}
\end{align}

\end{lemma}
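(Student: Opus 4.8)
The plan is to prove \eqref{pdl1} by direct evaluation of the defining sum and then to recognise the result as a classical Vandermonde-type convolution. Substituting $f(s)=(s-a)^{\overline{\beta}}$ into Definition \ref{fractional sums} and using $\rho(s)=s-1$, so that $t-\rho(s)=t-s+1$, I would first write
\[
\prescript{}{a}\nabla^{-\alpha}(t-a)^{\overline{\beta}}=\frac{1}{\Gamma(\alpha)}\sum_{s=a+1}^{t}(t-s+1)^{\overline{\alpha-1}}(s-a)^{\overline{\beta}}.
\]
Setting $n=t-a\in\mathbb{N}_1$ and shifting the summation index via $j=s-a-1$, the sum becomes $\frac{1}{\Gamma(\alpha)}\sum_{j=0}^{n-1}(n-j)^{\overline{\alpha-1}}(j+1)^{\overline{\beta}}$, so the whole assertion reduces to the purely combinatorial claim
\[
\sum_{j=0}^{n-1}(n-j)^{\overline{\alpha-1}}(j+1)^{\overline{\beta}}=\frac{\Gamma(\alpha)\Gamma(\beta+1)}{\Gamma(\alpha+\beta+1)}\,n^{\overline{\alpha+\beta}}.
\]

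The key step is to recognise this as a Vandermonde convolution. Using the Gamma-function form \eqref{alpharising}, I would rewrite each rising factorial as a generalised binomial coefficient through the relation $x^{\overline{\mu}}=\Gamma(\mu+1)\binom{x+\mu-1}{\mu}$, after which the displayed identity collapses (up to the Gamma-function prefactors, and using the symmetry $\binom{N}{K}=\binom{N}{N-K}$) to
\[
\sum_{j=0}^{n-1}\binom{j+\beta}{j}\binom{(n-1-j)+(\alpha-1)}{n-1-j}=\binom{(n-1)+(\alpha-1)+\beta+1}{n-1}.
\]
This is exactly the Chu--Vandermonde convolution $\sum_{k=0}^{m}\binom{k+r}{k}\binom{m-k+s}{m-k}=\binom{m+r+s+1}{m}$ with $m=n-1$, $r=\beta$, $s=\alpha-1$. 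Converting back to rising factorials then yields \eqref{pdl1}.

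The main obstacle is that $\alpha$ and $\beta$ are real rather than integer, so the binomial coefficients above are defined only through \eqref{alpharising}, and the elementary lattice-path proof of Chu--Vandermonde does not apply directly. I would circumvent this by establishing the convolution via the negative-binomial generating function $\sum_{k\ge 0}\binom{k+r}{k}x^{k}=(1-x)^{-(r+1)}$, which holds for all real $r$ on $|x|<1$. Taking the Cauchy product of the two series with parameters $r=\beta$ and $s=\alpha-1$ produces $(1-x)^{-(\alpha+\beta+1)}$, and comparing the coefficient of $x^{\,n-1}$ on both sides gives precisely the required identity for every integer $n\ge 1$ and arbitrary real $\alpha>0$, $\beta>-1$. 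The hypotheses $\alpha>0$ and $\beta>-1$ ensure that all the Gamma factors involved are finite and nonzero, so no degenerate cases arise.

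Finally, \eqref{pdr111} would be obtained by the same method applied to the right fractional sum. Writing out its definition with $\rho(t)=t-1$ gives $\nabla_{b}^{-\alpha}(b-t)^{\overline{\beta}}=\frac{1}{\Gamma(\alpha)}\sum_{s=t}^{b-1}(s-t+1)^{\overline{\alpha-1}}(b-s)^{\overline{\beta}}$, and the substitution $k=b-s$ together with $m=b-t$ turns this into $\frac{1}{\Gamma(\alpha)}\sum_{k=1}^{m}(m-k+1)^{\overline{\alpha-1}}k^{\overline{\beta}}$, which is precisely the convolution already evaluated in the left-type case. Hence the same Vandermonde identity delivers \eqref{pdr111} without any further computation; alternatively one may simply invoke the left--right duality between $\prescript{}{a}\nabla^{-\alpha}$ and $\nabla_{b}^{-\alpha}$.
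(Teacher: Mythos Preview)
Your argument is correct. The substitution, the reduction to the Chu--Vandermonde convolution, and the generating-function justification for non-integer parameters are all sound; the right-type case is indeed obtained from the left-type one by the change of variable you indicate.

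As for comparison with the paper: the paper does not actually prove Lemma~\ref{power nabla left and right}. It is stated with citations to \cite{dualR,ThFer} and used as a black box thereafter (see also the proof of Proposition~\ref{sum and diff}, which invokes the same identity by reference). Your write-up therefore supplies something the paper does not --- a self-contained derivation. The route via $\sum_{k\ge0}\binom{k+r}{k}x^{k}=(1-x)^{-(r+1)}$ is the cleanest way to handle the real-parameter case and avoids any appeal to analytic continuation in $\alpha,\beta$. One minor cosmetic point: the hypothesis ``$h>0$'' in the lemma statement is vestigial (it comes from an $h$-discrete version in the cited references) and plays no role in either your proof or the paper's use of the lemma, so you need not worry about it.
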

\indent

From \cite{Goodrich,dualR}, we recall that the left and right nabla fractional sums satisfy the following semigroup property:
\begin{align}
\label{semi nablal} \prescript{}{a}\nabla^{-\alpha}\prescript{}{a}\nabla^{-\mu}f(v)&=\prescript{}{a}\nabla^{-(\alpha+\mu)}f(v), \\
\label{semi nablar} \nabla_b^{-\alpha}~\nabla_b^{-\mu}f(v)&=\nabla_b^{-(\alpha+\mu)}f(v),
\end{align}

\begin{definition}[Nabla discrete Laplace transforms -- see \cite{TD AIDE 2016}]\label{dl}
The nabla discrete Laplace transform $\mathcal{K}=\mathcal{K}_0$, applied to a function $f$ defined on $\mathbb{N}_0$, is defined by
\begin{equation}\label{dle}
   \mathcal{K} f(z)=\sum_{t=1}^\infty (1-z)^{t-1}f(t).
\end{equation}
More generally, for any $a$, if $f$ is a function on $\mathbb{N}_a$, the nabla discrete Laplace transform $\mathcal{K}_a$ is defined by
\begin{equation}\label{gdle}
   \mathcal{K}_a f(z)=\sum_{t=a+1}^\infty (1-z)^{t-a-1}f(t).
\end{equation}
\end{definition}

\begin{lemma} \label{At}
 For any $\mu \in \mathbb{R}\setminus\{...,-2,-1,0\}$, we have the following results on nabla discrete Laplace transforms.
 \begin{itemize}
   \item[(i)] $\mathcal{K}(t^{\overline{{\mu-1}}})(z)=\frac{\Gamma(\mu)} {z^\mu},~~|1-z|<1$,
   \item[(ii)] $\mathcal{K}(t^{\overline{{\mu-1}}}b^{-t})(z)=\frac{b^{\mu-1}\Gamma(\mu)}{(z+b-1)^\mu},~~|1-z|<b$.
 \end{itemize}
\end{lemma}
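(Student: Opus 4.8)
The plan is to attack both parts directly from the series definition of $\mathcal{K}$ in Definition~\ref{dl}, using the representation of the rising function as a ratio of Gamma functions together with the generalized binomial series. For part (i), I would begin by writing
\[
\mathcal{K}(t^{\overline{\mu-1}})(z) = \sum_{t=1}^\infty (1-z)^{t-1} \frac{\Gamma(t+\mu-1)}{\Gamma(t)},
\]
invoking \eqref{alpharising}. Shifting the summation index via $t = n+1$ converts this into $\sum_{n=0}^\infty (1-z)^n \frac{\Gamma(n+\mu)}{n!}$, which I recognize, after pulling out a factor $\Gamma(\mu)$, as $\Gamma(\mu)$ times the binomial series $\sum_{n=0}^\infty \binom{n+\mu-1}{n}(1-z)^n = \bigl(1-(1-z)\bigr)^{-\mu} = z^{-\mu}$. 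This yields the claimed value $\Gamma(\mu) z^{-\mu}$, and the radius of convergence of the binomial series is precisely what produces the stated restriction $|1-z|<1$.

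For part (ii), I would reduce to part (i) by a rescaling rather than re-summing from scratch. Writing $b^{-t} = b^{-1} b^{-(t-1)}$ and combining powers gives
\[
\mathcal{K}(t^{\overline{\mu-1}} b^{-t})(z) = b^{-1}\sum_{t=1}^\infty \Bigl(\tfrac{1-z}{b}\Bigr)^{t-1} t^{\overline{\mu-1}},
\]
so the inner sum is exactly the series computed in (i) with $1-z$ replaced by $\tfrac{1-z}{b}$. Setting $1-w = \tfrac{1-z}{b}$, that is $w = \tfrac{z+b-1}{b}$, part (i) evaluates the inner sum as $\Gamma(\mu)\, w^{-\mu} = \Gamma(\mu)\, b^\mu (z+b-1)^{-\mu}$; multiplying by the prefactor $b^{-1}$ produces $b^{\mu-1}\Gamma(\mu)(z+b-1)^{-\mu}$, as desired. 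The convergence condition $\bigl|\tfrac{1-z}{b}\bigr| < 1$ transcribes to $|1-z| < b$ for $b>0$.

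I expect the only genuinely delicate point to be the identification of the shifted series with the generalized binomial expansion of $(1-x)^{-\mu}$, together with the bookkeeping that confirms the factor $\Gamma(\mu)$ emerges cleanly and that the domain of validity matches the radius of convergence exactly; everything else is routine index manipulation, and the rescaling argument for (ii) conveniently inherits its convergence directly from (i) without a second computation. An alternative, should one wish to avoid treating the binomial series as a black box, would be to set up a recursion in $\mu$ based on the difference rule \eqref{oper} together with the behaviour of $\mathcal{K}$ under $\nabla$, but this needs extra machinery not recorded in the preliminaries, so the direct binomial approach seems cleanest and is most in keeping with the binomial-theorem method emphasized throughout the paper.
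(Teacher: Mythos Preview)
Your argument is correct. Part~(i) is a clean application of the generalized binomial series after the index shift $t=n+1$, and the factor $\Gamma(\mu)$ emerges exactly as you describe via $\tfrac{\Gamma(n+\mu)}{n!}=\Gamma(\mu)\binom{n+\mu-1}{n}$. The rescaling for part~(ii) is also right: writing $b^{-t}=b^{-1}\bigl(\tfrac{1}{b}\bigr)^{t-1}$ turns the series into the one from~(i) evaluated at $w=\tfrac{z+b-1}{b}$, and the prefactor $b^{-1}$ combines with $w^{-\mu}=b^{\mu}(z+b-1)^{-\mu}$ to give the stated $b^{\mu-1}$. The convergence conditions fall out as you say.

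By contrast, the paper does not give its own argument here at all: its proof consists solely of the citation ``See \cite{Nabla}'', deferring entirely to At{\i}c{\i}--Eloe. So your proposal is not merely a different route but an actual self-contained proof where the paper supplies none. What you gain is independence from the external reference and an argument that meshes naturally with the binomial-theorem viewpoint used later in the paper; what the paper gains by citing is brevity and the ability to lean on a result already established in the literature. Your alternative idea of a recursion in $\mu$ via \eqref{oper} and Lemma~\ref{lap of nabla} would also work but is indeed less direct, so your choice of the binomial approach is the right one.
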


\begin{proof}
See \cite{Nabla}.
\end{proof}

\begin{remark}\label{rem1}
We can extend (i) of Lemma \ref{At} to the more general statement that
 \[(\mathcal{K}_{a}(t-a)^{\overline{\mu-1}})(s)= \frac{\Gamma(\mu)}{s^\mu}.\]
\end{remark}

\begin{definition} \label{nDML}[Nabla Discrete Mittag-Leffler -- see \cite{dualCaputo, dualR, Thsemi,TD AIDE 2016}] For $\lambda \in \mathbb{R}$ with $|\lambda|<1$ and $\alpha, \beta,\rho, v \in \mathbb{C}$ with $Re(\alpha)>0$, the nabla discrete  Mittag-Leffler functions with one, two, and three parameters are defined respectively by:

\begin{align}
\label{nM22} E_{\overline{\alpha}} (\lambda, v)&=  \sum_{k=0}^\infty \lambda^k
\frac{v^{\overline{k\alpha}}} {\Gamma(\alpha k+1)}; \\
\label{ML2} E_{\overline{\alpha, \beta}}(\lambda,v)&= \sum_{k=0}^\infty \lambda^k
\frac{v^{\overline{k\alpha+\beta-1}}} {\Gamma(\alpha k+\beta)}; \\
\label{ML3}  E^\rho_{\overline{\alpha,\beta}} (\lambda, v)&=\sum_{k=0}^\infty  (\rho)_k
\frac{v^k} { k! \Gamma(\alpha k+\beta)}.
\end{align}

\end{definition}

Note that we have $E_{\overline{\alpha}} (\lambda, v)=E_{\overline{\alpha, 1}}(\lambda,v)$ and
$E_{\overline{\alpha,\beta}}(\lambda,v)=E^1_{\overline{\alpha,\beta}} ( \lambda,v)$, just as in the continuous case \cite{Mainardi}.

\begin{proposition} \label{sum and diff}For any $\lambda,\alpha, \beta,\rho, v \in \mathbb{C}$ as in Definition \ref{nDML} and $\gamma \in \mathbb{C}$ with $Re(\gamma)>0$, we have the following difference and summation properties of discrete Mittag-Leffler functions.

\begin{align*}
\nabla_v E_{\overline{\alpha}} (\lambda, v)&= \lambda E_{\overline{\alpha,\alpha}} (\lambda, v); \\
\nabla_v E^\rho_{\overline{\alpha,\beta}} (\lambda, v)&=E^\rho_{\overline{\alpha,\beta-1}} (\lambda, v); \\
\sum_{t=a+1}^v  E_{\overline{\alpha,\beta}} (\lambda, t-a) &=E_{\overline{\alpha,\beta+1}} (\lambda, v-a); \\
\prescript{}{a}\nabla^{-\gamma}  E^\rho_{\overline{\alpha,\beta}} (\lambda, v-a)&=E^\rho_{\overline{\alpha,\beta+\gamma}} (\lambda, v-a).
\end{align*}

\end{proposition}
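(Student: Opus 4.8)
The plan is to establish all four identities by manipulating the defining power series of the discrete Mittag-Leffler functions term-by-term, reducing each claim to an elementary fact about rising functions already recorded in the excerpt. In every case the operator applied ($\nabla_v$, the finite sum $\sum_{t=a+1}^v$, or the fractional sum $\prescript{}{a}\nabla^{-\gamma}$) is linear and acts on each summand as a single operation on one rising function, so the whole calculation is driven by the power rule (\ref{oper}) and the fractional power rule (\ref{pdl1}).

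For the first identity I would differentiate $E_{\overline{\alpha}}(\lambda,v)=\sum_{k=0}^\infty \lambda^k v^{\overline{k\alpha}}/\Gamma(\alpha k+1)$ termwise using (\ref{oper}), which gives $\nabla_v(v^{\overline{k\alpha}})=k\alpha\,v^{\overline{k\alpha-1}}$. The $k=0$ term drops out, and the factor $k\alpha$ cancels against $\Gamma(\alpha k+1)=\alpha k\,\Gamma(\alpha k)$; after shifting the summation index by one and extracting a factor $\lambda$, the series becomes exactly $\lambda E_{\overline{\alpha,\alpha}}(\lambda,v)$. The second identity is of the same type: applying (\ref{oper}) to the rising function appearing in each term produces a factor $(\alpha k+\beta-1)$ that cancels against $\Gamma(\alpha k+\beta)=(\alpha k+\beta-1)\Gamma(\alpha k+\beta-1)$, leaving precisely $E^\rho_{\overline{\alpha,\beta-1}}(\lambda,v)$ with no index shift required.

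For the third (summation) identity the key lemma is the discrete antidifference of a rising function: since $\nabla_t[(t-a)^{\overline{\mu+1}}]=(\mu+1)(t-a)^{\overline{\mu}}$ by (\ref{oper}), telescoping yields $\sum_{t=a+1}^v(t-a)^{\overline{\mu}}=(v-a)^{\overline{\mu+1}}/(\mu+1)$, the lower endpoint contributing nothing because $0^{\overline{\mu+1}}=0$. Interchanging the finite sum over $t$ with the series and applying this with $\mu=k\alpha+\beta-1$, the denominator becomes $(k\alpha+\beta)\Gamma(\alpha k+\beta)=\Gamma(\alpha k+\beta+1)$, which is exactly the series defining $E_{\overline{\alpha,\beta+1}}(\lambda,v-a)$. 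The fourth identity is handled identically but with the fractional sum power rule (\ref{pdl1}) of Lemma \ref{power nabla left and right} in place of the telescoping sum: applying it to $(v-a)^{\overline{\alpha k+\beta-1}}$ produces a factor $\Gamma(\alpha k+\beta)/\Gamma(\alpha k+\beta+\gamma)$ whose numerator cancels, and the series reassembles as $E^\rho_{\overline{\alpha,\beta+\gamma}}(\lambda,v-a)$.

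The only genuine technical point is justifying the termwise interchange of each operator with the infinite series. For $\nabla_v$ and the finite sum $\sum_{t=a+1}^v$ — and indeed for the fractional sum, which by Definition \ref{fractional sums} is also a \emph{finite} sum for each fixed $v\in\mathbb{N}_{a+1}$ — this is merely the interchange of a finite sum with an infinite one, so it suffices to check absolute convergence of the Mittag-Leffler series. For fixed $v$ the general term has modulus comparable to $|\lambda|^k$ times a factor growing only polynomially in $k$ (the ratio of rising function to $\Gamma(\alpha k+\beta)$ behaving like a power of $k$ once the Gamma functions partially cancel), with $k!$ supplying further decay in the three-parameter case; convergence for $|\lambda|<1$ is then immediate. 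I expect this convergence bookkeeping to be the main — though routine — step to make fully rigorous.
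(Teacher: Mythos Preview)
Your proposal is correct and follows essentially the same route as the paper: the paper's proof is a one-line remark that the computations are straightforward termwise applications of the power rules, singling out the fractional-sum power rule $\prescript{}{a}\nabla^{-\gamma}(t-a)^{\overline{\nu-1}}=\frac{\Gamma(\nu)}{\Gamma(\nu+\gamma)}(t-a)^{\overline{\gamma+\nu-1}}$ for the last identity, which is exactly what you do in detail. The only additional observation the paper makes is that the second identity can be read as the formal case $\gamma=-1$ of the fourth, whereas you prove it directly from~(\ref{oper}); this is a cosmetic difference, not a different argument.
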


\begin{proof}The proof is straightforward  (see \cite{TD AIDE 2016}). In the proof of the last part, the assertion \cite{ThFer,dualCaputo,dualR,Goodrich} that \[\prescript{}{a}\nabla^{-\gamma} (t-a)^{\overline{\nu-1}}= \frac{\Gamma(\nu)}{\Gamma(\nu+\gamma)}(t-a)^{\overline{\gamma+\nu-1}}\] has been used. Also, notice that the second part is the particular case $\gamma=-1$ of the last part.
\end{proof}

The remainder of this section is dedicated to summarising some known results about discrete Laplace transforms for Mittag-Leffler functions and functions of convolution type. More details can be found in \cite{Thsemi}.

\begin{definition}[Nabla discrete convolutions -- see  \cite{Thsemi,Goodrich}]\label{conv}
Let $a \in \mathbb{R}$ and consider two functions $f,g:\mathbb{N}_a\rightarrow \mathbb{R}$. Their nabla discrete convolution is
\begin{equation}\label{dconv}
   ( f\ast g)(v)=\sum_{s=a+1}^v g(v-\rho(s)+a) f(s).
\end{equation}
\end{definition}

\begin{proposition}[\cite{Thsemi,Goodrich}] \label{convprop}
For any $a \in \mathbb{R}$  and functions $f,g$ defined on $\mathbb{N}_a$, we have the following convolution property of Laplace transforms in the nabla discrete context:
\begin{equation}\label{conv1}
    (\mathcal{K}_a (f \ast g))(s)=  (\mathcal{K}_af)(s) (\mathcal{K}_a g)(s).
\end{equation}
\end{proposition}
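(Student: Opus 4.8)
The plan is to expand both sides directly from the definitions and reduce the double sum on the left to a product of two single sums. First I would substitute the convolution formula \eqref{dconv} into the transform \eqref{gdle}, writing (with $u$ as the convolution dummy index, to avoid a clash with the transform variable $s$)
\begin{equation*}
(\mathcal{K}_a(f\ast g))(s)=\sum_{v=a+1}^{\infty}(1-s)^{v-a-1}\sum_{u=a+1}^{v}g(v-u+1+a)\,f(u),
\end{equation*}
having used $\rho(u)=u-1$ to rewrite the shifted argument of $g$ as $v-\rho(u)+a=v-u+1+a$.

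The central step is to interchange the two summations. The index region $\{(u,v):a+1\le u\le v\}$ is equivalently described as $\{(u,v):u\ge a+1,\ v\ge u\}$, so after swapping we obtain
\begin{equation*}
(\mathcal{K}_a(f\ast g))(s)=\sum_{u=a+1}^{\infty}f(u)\sum_{v=u}^{\infty}(1-s)^{v-a-1}g(v-u+1+a).
\end{equation*}
I would then perform the change of variable $w=v-u+1+a$ in the inner sum, so that $v$ ranging from $u$ to $\infty$ corresponds to $w$ ranging from $a+1$ to $\infty$. The decisive algebraic observation is that the exponent factors as $v-a-1=(w-a-1)+(u-a-1)$, giving
\[(1-s)^{v-a-1}=(1-s)^{u-a-1}(1-s)^{w-a-1}.\]
This clean separation is exactly what the particular shift $v-\rho(s)+a$ in Definition \ref{conv} is designed to produce.

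Factoring $(1-s)^{u-a-1}$ out of the inner sum then leaves $\sum_{w=a+1}^{\infty}(1-s)^{w-a-1}g(w)=(\mathcal{K}_ag)(s)$, which is independent of $u$. Pulling this constant factor outside the $u$-sum yields
\begin{equation*}
(\mathcal{K}_a(f\ast g))(s)=(\mathcal{K}_ag)(s)\sum_{u=a+1}^{\infty}(1-s)^{u-a-1}f(u)=(\mathcal{K}_af)(s)\,(\mathcal{K}_ag)(s),
\end{equation*}
as required. The one point requiring care is the interchange of summation order: this is legitimate provided the iterated series converges absolutely, which holds for $|1-s|$ sufficiently small, inside the common region of convergence of $\mathcal{K}_af$ and $\mathcal{K}_ag$, so a Tonelli/Fubini argument justifies the rearrangement. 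I expect this convergence bookkeeping to be the only genuine obstacle, the remaining manipulations being purely formal.
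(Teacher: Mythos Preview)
Your argument is correct and is exactly the standard direct computation for this convolution identity: swap the order of summation over the triangular region, substitute $w=v-u+1+a$, split the exponent $(1-s)^{v-a-1}$, and factor. The paper itself does not prove this proposition at all; it is simply quoted as a known result from the cited references \cite{Thsemi,Goodrich}, so there is no in-paper proof to compare against.
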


\begin{lemma}[\cite{Thsemi}] \label{lap of nabla}
Let $a\in\mathbb{R}$ and let $f$ be a function defined on $\mathbb{N}_a$. Then
\begin{equation}\label{lap of 1}
    (\mathcal{K}_a \nabla(f(t))(s)=s (\mathcal{K}_af)(s)-f(a).
\end{equation}
\end{lemma}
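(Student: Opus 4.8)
The plan is to compute the Laplace transform of $\nabla f$ directly from the series definition \eqref{gdle} and reduce the resulting expression back to $\mathcal{K}_a f$. Recalling that the nabla (backward) difference is $\nabla f(t)=f(t)-f(t-1)$, I would begin by writing
\[
(\mathcal{K}_a \nabla f)(s)=\sum_{t=a+1}^{\infty}(1-s)^{t-a-1}\bigl(f(t)-f(t-1)\bigr),
\]
and then, inside the region of absolute convergence of the transform (so that the rearrangement is justified), split this into two separate series. The first of these is exactly $(\mathcal{K}_a f)(s)$ by definition, so the whole problem reduces to evaluating the second series $\sum_{t=a+1}^{\infty}(1-s)^{t-a-1}f(t-1)$ in terms of $(\mathcal{K}_a f)(s)$ and the boundary value $f(a)$.

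The key manoeuvre is an index shift in that second series. Substituting $u=t-1$ turns it into $\sum_{u=a}^{\infty}(1-s)^{u-a}f(u)$, where the lower limit now starts at $u=a$ rather than $u=a+1$. I would then peel off the $u=a$ term, which contributes $(1-s)^{0}f(a)=f(a)$, and factor a single power of $(1-s)$ out of the remaining sum. What is left, $(1-s)\sum_{u=a+1}^{\infty}(1-s)^{u-a-1}f(u)$, is precisely $(1-s)(\mathcal{K}_a f)(s)$, again by the definition \eqref{gdle}. Thus the second series equals $f(a)+(1-s)(\mathcal{K}_a f)(s)$.

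Combining the two pieces gives $(\mathcal{K}_a\nabla f)(s)=(\mathcal{K}_a f)(s)-f(a)-(1-s)(\mathcal{K}_a f)(s)$, and collecting the coefficient of $(\mathcal{K}_a f)(s)$ via $1-(1-s)=s$ yields the claimed identity $s(\mathcal{K}_a f)(s)-f(a)$. The computation is essentially a discrete summation by parts, and the only genuinely delicate point is the bookkeeping in the index shift: one must be careful that reindexing $t\mapsto u=t-1$ correctly produces the extra $u=a$ boundary term, since it is exactly this term that generates the $-f(a)$ in the final formula. The interchange and regrouping of the infinite series is legitimate as long as $s$ lies in the domain where \eqref{gdle} converges absolutely, so I would state that hypothesis (effectively $|1-s|$ small enough) explicitly rather than treat the series formally.
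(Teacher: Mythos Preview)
Your argument is correct: the direct series computation with the index shift $u=t-1$ and the extraction of the boundary term $f(a)$ is exactly the standard way to establish this identity, and your bookkeeping is accurate. Note that the paper itself does not supply a proof of this lemma---it is quoted from \cite{Thsemi} without argument---so there is no in-paper proof to compare against; what you have written is the expected elementary derivation and would serve perfectly well in its place.
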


\begin{lemma}\cite{Nabla} \label{F}
For any $a\in\mathbb{R}$ and $\nu\in\mathbb{R}^+$, we have
\[(\mathcal{K}_{a} ~_{a}\nabla^{-\nu})f(s)=s^{-\nu}(\mathcal{K}_a f)(s).\]
\end{lemma}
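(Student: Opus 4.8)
The plan is to recognize the left nabla fractional sum as a nabla discrete convolution and then read off the result from the convolution theorem for $\mathcal{K}_a$ together with the power-function transform. First I would introduce the kernel $g(t)=\frac{1}{\Gamma(\nu)}(t-a)^{\overline{\nu-1}}$ on $\mathbb{N}_a$ and compute the shifted kernel that appears in the convolution of Definition \ref{conv}. Since $\rho(s)=s-1$, substituting $t=v-\rho(s)+a$ gives $t-a=v-\rho(s)$, so $g\bigl(v-\rho(s)+a\bigr)=\frac{1}{\Gamma(\nu)}(v-\rho(s))^{\overline{\nu-1}}$. Comparing this with the explicit sum in Definition \ref{fractional sums}, and noting that both summations run over $s=a+1,\dots,v$, I obtain the identification
\[
\prescript{}{a}\nabla^{-\nu}f(v)=\sum_{s=a+1}^{v} g\bigl(v-\rho(s)+a\bigr)f(s)=(f\ast g)(v).
\]

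Next I would apply the convolution property of Proposition \ref{convprop}, which yields
\[
\bigl(\mathcal{K}_a(\prescript{}{a}\nabla^{-\nu}f)\bigr)(s)=\bigl(\mathcal{K}_a(f\ast g)\bigr)(s)=(\mathcal{K}_af)(s)\,(\mathcal{K}_a g)(s).
\]
The only remaining computation is the transform of the kernel, and this is immediate from Remark \ref{rem1} with $\mu=\nu$: we have $\bigl(\mathcal{K}_a(t-a)^{\overline{\nu-1}}\bigr)(s)=\Gamma(\nu)/s^{\nu}$, so the prefactor $1/\Gamma(\nu)$ cancels and $(\mathcal{K}_a g)(s)=s^{-\nu}$. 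Substituting this back gives $\bigl(\mathcal{K}_a(\prescript{}{a}\nabla^{-\nu}f)\bigr)(s)=s^{-\nu}(\mathcal{K}_af)(s)$, which is exactly the claim.

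The one genuinely delicate point — and where I would concentrate the verification — is the matching of the convolution's argument-shift convention in Definition \ref{conv} against the kernel $(z-\rho(s))^{\overline{\nu-1}}$ of the fractional sum, including the agreement of the summation limits. Everything else is formal: once the identity $\prescript{}{a}\nabla^{-\nu}f=f\ast g$ is established with the correct $g$, the lemma is a one-line consequence of the convolution theorem and the power-function Laplace transform. I would also remark in passing that the manipulations are valid on the region where the relevant transform series converge (as in Lemma \ref{At} and Remark \ref{rem1}), so no separate convergence argument is needed beyond citing those results.
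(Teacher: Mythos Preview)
Your argument is correct: the identification $\prescript{}{a}\nabla^{-\nu}f=f\ast g$ with $g(t)=\tfrac{1}{\Gamma(\nu)}(t-a)^{\overline{\nu-1}}$ is exactly right (the shift $v-\rho(s)+a$ in Definition~\ref{conv} and the summation limits do match the fractional-sum kernel), and then Proposition~\ref{convprop} together with Remark~\ref{rem1} finish the job. Note, however, that the paper does not actually prove this lemma at all --- it simply imports the result from \cite{Nabla} --- so there is no ``paper's own proof'' to compare against; your convolution argument is the standard one and would serve perfectly well as a self-contained proof here.
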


\begin{lemma} \label{compute certain functions}\cite{Thsemi}
Let $0<\alpha \leq 1$, $a\in\mathbb{R}$, and $f$ be a function defined on $\mathbb{N}_a$. Then:
\begin{itemize}
\item[(i)] $(\mathcal{K}_a E_{\overline{\alpha}}(\lambda,t-a))(z)= \frac{z^{\alpha-1}}{z^\alpha-\lambda}.$
\item[(ii)] $(\mathcal{K}_a E_{\overline{\alpha,\alpha}}(\lambda,t-a))(z)= \frac{1}{z^\alpha-\lambda}.$
\end{itemize}
\end{lemma}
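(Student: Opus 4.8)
The plan is to expand each discrete Mittag--Leffler function as its defining power series in rising factorials, apply the Laplace transform $\mathcal{K}_a$ term by term, and then recognise the result as a geometric series. The single essential computational tool is the power-rule transform recorded in Remark~\ref{rem1}, namely $(\mathcal{K}_a (t-a)^{\overline{\mu-1}})(z)=\Gamma(\mu)/z^{\mu}$; the whole point of the argument is that the Gamma factors appearing in this formula are exactly the ones sitting in the denominators of the Mittag--Leffler series, so they cancel and leave a clean geometric series in $\lambda/z^{\alpha}$.

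For part (i), I would start from the one-parameter series \eqref{nM22}, so that $E_{\overline{\alpha}}(\lambda,t-a)=\sum_{k=0}^{\infty}\lambda^{k}(t-a)^{\overline{k\alpha}}/\Gamma(\alpha k+1)$. The $k$-th summand is a rising factorial $(t-a)^{\overline{\mu-1}}$ with $\mu=\alpha k+1$, so Remark~\ref{rem1} sends it to $\Gamma(\alpha k+1)/z^{\alpha k+1}$. After cancelling the $\Gamma(\alpha k+1)$ factors I am left with $\sum_{k=0}^{\infty}\lambda^{k}z^{-(\alpha k+1)}=z^{-1}\sum_{k=0}^{\infty}(\lambda z^{-\alpha})^{k}$, which sums to $z^{-1}\cdot\frac{z^{\alpha}}{z^{\alpha}-\lambda}=\frac{z^{\alpha-1}}{z^{\alpha}-\lambda}$, as claimed. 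Part (ii) is identical in structure: using the two-parameter series \eqref{ML2} with $\beta=\alpha$, the $k$-th term is $(t-a)^{\overline{\mu-1}}$ with $\mu=\alpha(k+1)$, its transform is $\Gamma(\alpha k+\alpha)/z^{\alpha k+\alpha}$, and after the Gamma factors cancel the series becomes $z^{-\alpha}\sum_{k=0}^{\infty}(\lambda z^{-\alpha})^{k}=\frac{1}{z^{\alpha}-\lambda}$.

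The one genuinely non-routine point, and the step I would treat most carefully, is the interchange of the infinite Mittag--Leffler sum with the infinite defining sum of $\mathcal{K}_a$ in \eqref{gdle}. This needs a dominated-convergence or absolute-convergence justification, valid on the region where both series converge: the transform \eqref{gdle} and the power-rule of Lemma~\ref{At} require $|1-z|<1$, while the resulting geometric series in $\lambda z^{-\alpha}$ converges only when $|z^{\alpha}|>|\lambda|$. Since $|\lambda|<1$ by hypothesis in Definition~\ref{nDML}, these constraints are compatible on a suitable punctured neighbourhood, and I would record the transform as an identity of analytic functions valid there (extending by analytic continuation elsewhere). I would also remark that part (i) is the $\beta=1$ specialisation of the same scheme, so both identities really follow from the single observation that $\mathcal{K}_a$ applied to $\lambda^{k}(t-a)^{\overline{\alpha k+\beta-1}}/\Gamma(\alpha k+\beta)$ yields $\lambda^{k}z^{-(\alpha k+\beta)}$.
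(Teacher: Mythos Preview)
Your argument is correct and is precisely the standard route to these identities: expand the discrete Mittag--Leffler series, apply the power rule $\mathcal{K}_a(t-a)^{\overline{\mu-1}}=\Gamma(\mu)/z^{\mu}$ from Remark~\ref{rem1} term by term, cancel the Gamma factors, and sum the geometric series. The computations you outline for both parts check out exactly, and your remark that (i) and (ii) are the $\beta=1$ and $\beta=\alpha$ cases of a single calculation is the right way to organise it.

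As for comparison with the paper: the paper does not actually prove Lemma~\ref{compute certain functions}. It is stated with a citation to \cite{Thsemi} and no proof environment follows; the result is simply imported as background. So there is no ``paper's own proof'' to compare against. Your derivation is the natural one and is essentially what one finds in the cited source. Your attention to the interchange-of-sums issue and the region of validity $|1-z|<1$, $|\lambda|<|z^{\alpha}|$ is more careful than what such survey statements typically record, and is a welcome addition rather than a gap.
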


\subsection{Discrete Atangana--Baleanu fractional differences}
Let us review the basic theory of fractional sums and differences defined using discrete Mittag-Leffler kernels, as presented in \cite{TD AIDE 2016} based on the original ideas in \cite{Mainardi, Antony etal,Abdon}.

\begin{definition}[\cite{TD AIDE 2016}] \label{dABDdef}
Let $\alpha \in [0,1]$ and $a<b$ in $\mathbb{R}$. For a function $f$ defined on $\mathbb{N}_a$,  its nabla discrete AB left fractional difference is defined in Caputo type by
\begin{equation}\label{d1}
\left(\prescript{ABC}{a}\nabla^\alpha f\right)(t)=\frac{B(\alpha)}{1-\alpha} \sum_{s=a+1}^t\nabla_s f(s)E_{\overline{\alpha}}\left(\frac{ -\alpha}{1-\alpha}, t-\rho(s) \right),
\end{equation}
and in Riemann-Liouville type by
\begin{equation}\label{d2}
\left(\prescript{ABR}{a}\nabla^\alpha f\right)(t)=\frac{B(\alpha)}{1-\alpha}\nabla_t \sum_{s=a+1}^t f(s)E_{\overline{\alpha}}\left(\frac{ -\alpha}{1-\alpha}, t-\rho(s) \right).
\end{equation}
Similarly, for a function $f$ defined on $_b\mathbb{N}$, its nabla discrete AB right fractional difference is defined in Caputo type by
\begin{equation}\label{Crd}
 \left(\prescript{ABC}{}\nabla_b^\alpha f\right)(t)=\frac{B(\alpha)}{1-\alpha} \sum_{s=t}^{b-1} (-\Delta_s f)(s)E_{\overline{\alpha}}\left(\frac{ -\alpha}{1-\alpha}, s-\rho(t)\right).
\end{equation}
and in Riemann--Liouville type by
\begin{equation}\label{nrd}
 \left(\prescript{ABR}{}\nabla_b^\alpha f\right)(t)=\frac{B(\alpha)}{1-\alpha}(-\Delta_t) \sum_{s=t}^{b-1} f(s)E_{\overline{\alpha}}\left(\frac{ -\alpha}{1-\alpha}, s-\rho(t)\right).
\end{equation}
The notations ABR and ABC are used to denote AB fractional differences of Riemann--Liouville and Caputo type respectively.

Note that since the discrete Mittag-Leffler kernel \eqref{nM22} converges for $|\lambda|<1$, and in this case $|\lambda|=\frac{\alpha}{1-\alpha}$, the kernels in all four of the above definitions are convergent for $0<\alpha <\frac{1}{2}$.
\end{definition}

We now define the fractional sums corresponding to the fractional difference defined in Definition \ref{dABDdef}. Again this is analogous to the definition found in \cite{Abdon} for the continuous case.

\begin{definition}\cite{TD AIDE 2016} \label{dABIdef}
For $0<\alpha<1$ and a function $f$ defined on $\mathbb{N}_a$, the left fractional sum of AB type is
\begin{equation}\label{T4}
  \left(\prescript{AB}{a}\nabla^{-\alpha} f\right)(t)=\frac{1-\alpha}{B(\alpha)}f(t)+\frac{\alpha}{B(\alpha)} \left(\prescript{}{a}\nabla^{-\alpha}f\right)(t).
\end{equation}
Similarly, for a function $f$ defined on $~_{b}\mathbb{N}$, the right fractional sum of AB type is
\begin{equation}\label{nrd}
  \left(\prescript{AB}{}\nabla_b^{-\alpha} f\right)(t)=\frac{1-\alpha}{B(\alpha)}f(t)+\frac{\alpha}{B(\alpha)} \left(\nabla_{b}^{-\alpha} f\right)(t).
\end{equation}
\end{definition}

\begin{theorem}
For any $\alpha\in(0,\frac{1}{2})$ and any function $f$ defined on $\mathbb{N}_a\cap~_b\mathbb{N}$, we have the following relations between the AB fractional differences of Caputo and Riemann--Liouville type and the associated AB fractional sum.
\begin{align*}
\left(\prescript{ABR}{a}\nabla^{\alpha}\prescript{AB}{a}\nabla^{-\alpha} f\right)(t)&=f(t); \\
\left(\prescript{AB}{a}\nabla^{-\alpha}\prescript{ABR}{a}\nabla^{\alpha} f\right)(t)&=f(t); \\
\left(\prescript{ABR}{}\nabla_b^\alpha \prescript{AB}{}\nabla_b^{-\alpha }f\right)(t)&=f(t); \\
\left(\prescript{AB}{}\nabla_b^{-\alpha} \prescript{ABR}{}\nabla_b^{\alpha }f\right)(t)&=f(t); \\
\left(\prescript{ABC}{a}\nabla^{\alpha} f\right)(t)=\left(\prescript{ABR}{a}\nabla^{\alpha} f\right)(t)&-f(a)\frac{B(\alpha)}{1-\alpha}E_{\overline{\alpha}}(\lambda,t-a); \\
\left(~^{ABC}\nabla_b^{\alpha} f\right)(t)=\left(~^{ABR}\nabla_b^{\alpha} f\right)(t)&-f(b)\frac{B(\alpha)}{1-\alpha}E_{\overline{\alpha}}(\lambda,b-t).
\end{align*}
\end{theorem}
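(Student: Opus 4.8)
The plan is to prove all six identities by passing to the nabla discrete Laplace transform $\mathcal{K}_a$ and then invoking its injectivity: once two functions are shown to have the same transform, they must coincide. Throughout I would write $\lambda=\frac{-\alpha}{1-\alpha}$, so that $s^\alpha-\lambda=\frac{(1-\alpha)s^\alpha+\alpha}{1-\alpha}$, and I would note that the kernel common to every AB operator is $g(t)=E_{\overline{\alpha}}(\lambda,t-a)$, whose transform is $\mathcal{K}_ag(s)=\frac{s^{\alpha-1}}{s^\alpha-\lambda}=\frac{(1-\alpha)s^{\alpha-1}}{(1-\alpha)s^\alpha+\alpha}$ by Lemma \ref{compute certain functions}(i). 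First I would transform the AB sum: applying Lemma \ref{F} to the term $\prescript{}{a}\nabla^{-\alpha}f$ in Definition \ref{dABIdef} gives $\mathcal{K}_a\big(\prescript{AB}{a}\nabla^{-\alpha}f\big)(s)=\frac{(1-\alpha)s^\alpha+\alpha}{B(\alpha)\,s^\alpha}\,\mathcal{K}_af(s)$.

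Next I would transform the Riemann--Liouville difference. The inner sum in \eqref{d2} is exactly the nabla convolution $(f\ast g)(t)$ of Definition \ref{conv}, since $g(t-\rho(s)+a)=E_{\overline{\alpha}}(\lambda,t-\rho(s))$; hence by Proposition \ref{convprop} its transform is $\mathcal{K}_af(s)\,\mathcal{K}_ag(s)$. Because this inner sum vanishes at $t=a$ (empty sum), Lemma \ref{lap of nabla} converts the outer $\nabla_t$ into multiplication by $s$ with no boundary term, yielding $\mathcal{K}_a\big(\prescript{ABR}{a}\nabla^{\alpha}f\big)(s)=\frac{B(\alpha)s^{\alpha}}{(1-\alpha)s^\alpha+\alpha}\,\mathcal{K}_af(s)$. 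The two symbols are mutually reciprocal, so their product is $1$ in either order; injectivity then delivers the first two identities at once.

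For the Caputo--Riemann--Liouville comparison (fifth identity) I would rewrite the Caputo difference of \eqref{d1} as $\frac{B(\alpha)}{1-\alpha}\big((\nabla f)\ast g\big)(t)$ and transform it, this time applying Lemma \ref{lap of nabla} to $f$ itself, which produces the boundary value $\mathcal{K}_a(\nabla f)(s)=s\,\mathcal{K}_af(s)-f(a)$. Multiplying by $\frac{B(\alpha)}{1-\alpha}\mathcal{K}_ag(s)$ splits the output into the ABR symbol times $\mathcal{K}_af(s)$ plus the extra term $-f(a)\frac{B(\alpha)}{1-\alpha}\frac{(1-\alpha)s^{\alpha-1}}{(1-\alpha)s^\alpha+\alpha}$. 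Recognising the final factor as $\mathcal{K}_ag(s)=\mathcal{K}_aE_{\overline{\alpha}}(\lambda,t-a)(s)$ identifies this correction as the transform of $f(a)\frac{B(\alpha)}{1-\alpha}E_{\overline{\alpha}}(\lambda,t-a)$, which is precisely the stated fifth identity after inversion.

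The right-type identities (third, fourth, and sixth) I expect to be the main obstacle, since the convolution theorem and the transform rules quoted above are formulated for left-type functions on $\mathbb{N}_a$. I would treat them either by first establishing the corresponding right-type transform identities (the right convolution rule and the $\nabla_b$-rule) on $_b\mathbb{N}$, or, more economically, by a reflection $t\mapsto a+b-t$ carrying the right operators $\nabla_b$ into the left operators $\prescript{}{a}\nabla$ and sending the right kernel $E_{\overline{\alpha}}(\lambda,s-\rho(t))$ to the left one, after which the algebra duplicates the left-type computation verbatim. The only genuine care required is tracking the boundary value $f(b)$ in the right Caputo case and confirming that the empty-sum convention makes the relevant endpoint contributions vanish in the Riemann--Liouville inversions.
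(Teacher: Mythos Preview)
The paper does not give an in-text proof of this theorem; its entire proof is the single line ``See \cite{TD AIDE 2016}''. So there is no argument in the present paper against which to compare your proposal step by step.

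That said, your plan is sound and is in fact the standard route used in the cited literature: compute the $\mathcal{K}_a$-symbols of $\prescript{AB}{a}\nabla^{-\alpha}$ and $\prescript{ABR}{a}\nabla^{\alpha}$, observe they are reciprocal, and invert. Your derivation of $\mathcal{K}_a\big(\prescript{AB}{a}\nabla^{-\alpha}f\big)(s)=\frac{(1-\alpha)s^\alpha+\alpha}{B(\alpha)s^\alpha}\mathcal{K}_af(s)$ from Lemma~\ref{F} and of $\mathcal{K}_a\big(\prescript{ABR}{a}\nabla^{\alpha}f\big)(s)=\frac{B(\alpha)s^\alpha}{(1-\alpha)s^\alpha+\alpha}\mathcal{K}_af(s)$ via Proposition~\ref{convprop} and Lemma~\ref{lap of nabla} (with vanishing boundary term, since the convolution is an empty sum at $t=a$) is correct. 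The Caputo--Riemann--Liouville relation likewise drops out exactly as you describe once the boundary term $-f(a)$ from Lemma~\ref{lap of nabla} is retained. Your assessment that the right-type identities are the only place requiring extra work is accurate; the reflection $t\mapsto a+b-t$ (equivalently, the $Q$-operator duality used elsewhere in this literature) is indeed the standard device, and carries the left computations over verbatim with $f(a)$ replaced by $f(b)$. The one point you should state explicitly, rather than leave implicit, is the injectivity of $\mathcal{K}_a$ on sequences on $\mathbb{N}_a$, which follows from the fact that \eqref{gdle} is a power series in $1-z$ whose coefficients are the values $f(t)$.
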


\begin{proof}
See \cite{TD AIDE 2016}.
\end{proof}

The following lemma, the discrete analogue of a result proved for the continuous AB model in \cite{baleanu-fernandez}, is essential to proceed in confirming our representations.

\begin{lemma}[\cite{TQ CAM 2018}]\label{TQ}
For any $0<\alpha< \frac{1}{2}$, with $\lambda:= \frac{-\alpha}{1-\alpha}$ and $f$ being a function defined on $\mathbb{N}_a$, we have
\begin{equation}\label{bbb}
 \left(\prescript{ABR}{a}\nabla^{\alpha} f\right)(t)=\frac{B(\alpha)}{1-\alpha}\left[f(t)+\sum_{k=1}^\infty \lambda^k (\prescript{}{a}\nabla^{-\alpha k} f)(t)\right].
\end{equation}
\end{lemma}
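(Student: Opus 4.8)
The plan is to work directly from the Riemann--Liouville definition \eqref{d2} and expand the discrete Mittag-Leffler kernel using its defining series \eqref{nM22}. With $\lambda=\frac{-\alpha}{1-\alpha}$ and $v=t-\rho(s)$, the definition reads
\[
\left(\prescript{ABR}{a}\nabla^\alpha f\right)(t)=\frac{B(\alpha)}{1-\alpha}\,\nabla_t \sum_{s=a+1}^t f(s)\sum_{k=0}^\infty \lambda^k \frac{(t-\rho(s))^{\overline{k\alpha}}}{\Gamma(\alpha k+1)}.
\]
The first step is to interchange the finite $s$-sum with the infinite $k$-sum. Because $0<\alpha<\tfrac12$ forces $|\lambda|<1$, the kernel series converges, and since the $s$-sum has only finitely many terms this interchange is immediate, giving
\[
\left(\prescript{ABR}{a}\nabla^\alpha f\right)(t)=\frac{B(\alpha)}{1-\alpha}\,\nabla_t \sum_{k=0}^\infty \lambda^k \sum_{s=a+1}^t \frac{(t-\rho(s))^{\overline{k\alpha}}}{\Gamma(\alpha k+1)}\,f(s).
\]

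The decisive recognition step is that each inner $s$-sum is exactly a nabla fractional sum. Comparing with Definition \ref{fractional sums} and choosing $\mu-1=k\alpha$, that is $\mu=k\alpha+1$, I would identify
\[
\sum_{s=a+1}^t \frac{(t-\rho(s))^{\overline{k\alpha}}}{\Gamma(\alpha k+1)}\,f(s)=\left(\prescript{}{a}\nabla^{-(k\alpha+1)} f\right)(t),
\]
so that the bracketed quantity becomes $\sum_{k=0}^\infty \lambda^k \left(\prescript{}{a}\nabla^{-(k\alpha+1)} f\right)(t)$ and only the application of $\nabla_t$ remains.

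To evaluate $\nabla_t\left(\prescript{}{a}\nabla^{-(k\alpha+1)} f\right)(t)$ I would split off a single ordinary summation via the semigroup property \eqref{semi nablal}, writing $\prescript{}{a}\nabla^{-(k\alpha+1)}=\prescript{}{a}\nabla^{-1}\,\prescript{}{a}\nabla^{-k\alpha}$. Since the order-one fractional sum is the cumulative sum, $\prescript{}{a}\nabla^{-1}g(t)=\sum_{s=a+1}^t g(s)$, and $\nabla_t$ of a cumulative sum returns the summand, one has $\nabla_t\,\prescript{}{a}\nabla^{-1}g(t)=g(t)$ for any $g$. Taking $g=\prescript{}{a}\nabla^{-k\alpha}f$ yields $\nabla_t\left(\prescript{}{a}\nabla^{-(k\alpha+1)} f\right)(t)=\left(\prescript{}{a}\nabla^{-k\alpha} f\right)(t)$ for each $k\geq 1$, while the same computation for $k=0$ gives simply $f(t)$. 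Separating out this $k=0$ term produces exactly the asserted identity \eqref{bbb}.

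The main obstacle is the rigorous justification of moving $\nabla_t$ inside the infinite series. Since $\nabla_t h(t)=h(t)-h(t-1)$ is a difference of two evaluations, it suffices to confirm that $\sum_{k=0}^\infty \lambda^k\left(\prescript{}{a}\nabla^{-(k\alpha+1)} f\right)(t)$ converges for each fixed $t$; then linearity of convergent series lets the difference pass termwise through the sum. For fixed $t$ the $k$-th summand is a finite sum of at most $t-a$ terms in which the factor $(t-\rho(s))^{\overline{k\alpha}}/\Gamma(\alpha k+1)$ grows only polynomially in $k$, so the geometric decay of $\lambda^k$ with $|\lambda|<1$ secures absolute convergence. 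This is precisely where the hypothesis $\alpha<\tfrac12$ does the essential work, and it simultaneously validates both interchanges used above.
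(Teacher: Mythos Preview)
Your argument is correct. You start from the Riemann--Liouville definition \eqref{d2}, expand the discrete Mittag-Leffler kernel as a series, recognise each inner sum as $\prescript{}{a}\nabla^{-(k\alpha+1)}f$, factor this as $\prescript{}{a}\nabla^{-1}\prescript{}{a}\nabla^{-k\alpha}f$ via the semigroup property, and then use $\nabla_t\prescript{}{a}\nabla^{-1}=\mathrm{id}$ to strip off the outer difference. The two interchanges (finite $s$-sum with $k$-series; $\nabla_t$ with $k$-series) are justified exactly as you say, since for fixed $t$ everything is finite and the convergence is absolute under $|\lambda|<1$.

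As for comparison with the paper: there is nothing to compare against. The paper does not prove Lemma~\ref{TQ}; it quotes the result from \cite{TQ CAM 2018} and uses it as a black box in the proof of Theorem~\ref{iterated:ABR}. Your direct computation is the natural route, and it is essentially what the cited reference does: unfold the Mittag-Leffler kernel, rewrite each term as a fractional sum, and cancel the outer $\nabla_t$ against one order of summation. One minor stylistic remark: your treatment of the $k=0$ term is slightly redundant, since the identity $\nabla_t\prescript{}{a}\nabla^{-(k\alpha+1)}f=\prescript{}{a}\nabla^{-k\alpha}f$ already holds for $k=0$ (giving $\prescript{}{a}\nabla^{0}f=f$), so no separate case split is needed.
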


\section{Iterated AB fractional difference-sum operators } \label{s:2}
\subsection{Definitions}
In this section,  by iterating the AB fractional sums we shall formulate a new class of fractional difference-sum operators, which has a semigroup property in one of its two parameters. These operators are the discrete analogue of the iterated AB differintegrals defined in \cite{fernandez-baleanu}.


Consider the AB left fractional sum defined in \eqref{T4}. If we iterate this operator $n$ times using the binomial theorem, and make use of the semigroup property \eqref{semi nablal} and the fact that $~_{a}\nabla^{-0}f(t)=f(t)$, then we have:
\begin{align*}
\left(\prescript{AB}{a}\nabla^{-\alpha}\right)^n f(t) &= \left[\frac{1-\alpha}{B(\alpha)}+\frac{\alpha}{B(\alpha)}\prescript{}{a}\nabla^{-\alpha}\right]^n f(t) \\
   &= \sum_{k=0}^n \frac{\binom{n}{k}(1-\alpha)^{n-k} \alpha^k}{B(\alpha)^n}\prescript{}{a}\nabla^{-k\alpha}f(t) \\
   &= \left(\frac{1-\alpha}{B(\alpha)}\right)^n f(t) \\
   &\hspace{2cm}+\sum_{k=1}^n \frac{\binom{n}{k}(1-\alpha)^{n-k} \alpha^k}{B(\alpha)^n \Gamma(\alpha k)}\left(\sum_{s=a+1}^t (t-\rho(s))^{\overline{k\alpha-1}}f(s)\right) \\
   &= \sum_{s=a+1}^t f(s) \Bigg[ \left(\frac{1-\alpha}{B(\alpha)}\right)^n \delta(t-\rho(s)) \\
   &\hspace{4cm}+ \sum_{k=1}^n \frac{\binom{n}{k}(1-\alpha)^{n-k} \alpha^k}{B(\alpha)^n \Gamma(\alpha k)} (t-\rho(s))^{\overline{k\alpha-1}}\Bigg],
\end{align*}
where $\delta(t-\rho(s))$ is the Dirac delta function, namely the function defined on the time scale $\mathbb{N}$ by
   \begin{equation}\label{dirac}
    \delta(t-\rho(s))=\begin{cases}
                          0\quad\quad \text{ if } t\neq s, \\
                          1\quad\quad \text{ if } t=s.
                        \end{cases}
   \end{equation}


   More generally, if $\mathbb{T}$ is an arbitrary time scale, then the Dirac delta function is defined by
    \begin{equation}\label{diracT}
    \delta_{\mathbb{T}}(t-\rho(s))=\begin{cases}
                          0\quad\quad\quad \text{ if } t\neq s, \\
                          \frac{1}{t-\rho_{\mathbb{T}}(t)}\quad \text{ if } t=s,
                        \end{cases}
   \end{equation}
   where $\rho_{\mathbb{T}}(t)$ is the backward jumping operator \cite{Martin} on the time scale $\mathbb{T}$ .
   In particular, if $\mathbb{T}=\mathbb{R}$, we have $t-\rho_{\mathbb{R}}(t)=0$ and hence we obtain the classical Dirac delta function.

   Now, for $\alpha\in [0,1]$, the above formulae for iteration of AB discrete operators can be fractionalised by replacing the index $n$ with a general $\mu  \in \mathbb{R}$ and replacing the finite sum by an infinite sum like the fractional binomial theorem. This idea is formalised in the following definition.

   \begin{definition} \label{Ab iterated}
   Let $\alpha \in [0,1]$, $\mu \in \mathbb{R}$, and $f$ be a function defined on $\mathbb{N}_a$. The iterated left AB fractional difference-sum of $f$ with order $(-\alpha,\mu)$, denoted by $~^{AB}_{a}\nabla^{(-\alpha,\mu)}f(t)$, is defined as:
   \begin{multline}\label{iterated AB def}
\prescript{AB}{a}\nabla^{(-\alpha,\mu)}f(t) =\sum_{k=0}^\infty \frac{\binom{\mu}{k}(1-\alpha)^{\mu-k} \alpha^k}{B(\alpha)^\mu}\prescript{}{a}\nabla^{-k\alpha}f(t)  \\
= \sum_{s=a+1}^t f(s) \left[ \left(\frac{1-\alpha}{B(\alpha)}\right)^\mu \delta(t-\rho(s))+ \sum_{k=1}^\infty \frac{\binom{\mu}{k}(1-\alpha)^{\mu-k} \alpha^k}{B(\alpha)^\mu \Gamma(\alpha k)} (t-\rho(s))^{\overline{k\alpha-1}}\right].
\end{multline}
Since $\mu$ can be negative, the left iterated AB difference of order $(-\alpha,-\mu)$ can be defined in exactly the same way, i.e.:
\begin{multline}\label{iterated AB diff}
\prescript{AB}{a}\nabla^{(-\alpha,-\mu)}f(t) =\sum_{k=0}^\infty \frac{\binom{-\mu}{k}B(\alpha)^\mu \alpha^k}{ (1-\alpha)^{\mu+k}}\prescript{}{a}\nabla^{-k\alpha}f(t)  \\
= \sum_{s=a+1}^t f(s) \left[ \left(\frac{B(\alpha)}{1-\alpha}\right)^\mu \delta(t-\rho(s))+ \sum_{k=1}^\infty \frac{\binom{-\mu}{k}B(\alpha)^\mu \alpha^k}{(1-\alpha)^{\mu+k} \Gamma(\alpha k)} (t-\rho(s))^{\overline{k\alpha-1}}\right].
\end{multline}
   \end{definition}

The iterated right AB fractional difference-sum and difference operators can be defined in an exactly analogous way to Definition \ref{Ab iterated}, namely as follows.
    \begin{definition} \label{rAb iterated}
   Let $\alpha \in [0,1]$, $\mu \in \mathbb{R}$, and $f$ be a function defined on $~_{b}\mathbb{N}$. The iterated right AB fractional difference-sum of $f$ with order $(-\alpha,\mu)$, denoted by $~^{AB}\nabla_b^{(-\alpha,\mu)}f(t)$, is defined as:
   \begin{multline}\label{iterated AB def}
\prescript{AB}{}\nabla_b^{(-\alpha,\mu)}f(t) =\sum_{k=0}^\infty \frac{\binom{\mu}{k}(1-\alpha)^{\mu-k} \alpha^k}{B(\alpha)^\mu}\nabla_b^{-k\alpha}f(t)  \\
= \sum_{s=t}^{b-1} f(s) \left[ \left(\frac{1-\alpha}{B(\alpha)}\right)^\mu \delta(s-\rho(t))+ \sum_{k=1}^\infty \frac{\binom{\mu}{k}(1-\alpha)^{\mu-k} \alpha^k}{B(\alpha)^\mu \Gamma(\alpha k)} (s-\rho(t))^{\overline{k\alpha-1}}\right].
\end{multline}
Since $\mu$ can be negative, the right iterated AB difference of order $(-\alpha,-\mu)$ can be defined in exactly the same way, i.e.:
   \begin{multline}\label{iterated AB diffr}
~^{AB}\nabla_b^{(-\alpha,-\mu)}f(t) =\sum_{k=0}^\infty \frac{\binom{-\mu}{k}B(\alpha)^\mu \alpha^k}{ (1-\alpha)^{\mu+k}}\nabla_b^{-k\alpha}f(t)  \\
= \sum_{s=t}^{b-1} f(s) \left[ \left(\frac{B(\alpha)}{1-\alpha}\right)^\mu \delta(s-\rho(t))+ \sum_{k=1}^\infty \frac{\binom{-\mu}{k}B(\alpha)^\mu \alpha^k}{(1-\alpha)^{\mu+k} \Gamma(\alpha k)} (s-\rho(t))^{\overline{k\alpha-1}}\right].
\end{multline}
   \end{definition}
   Notice that on the time scale $\mathbb{N}$ we have $s-\rho(t)=\sigma(s)-t$, and therefore the Dirac delta function in the sense of delta time scale analysis ($\delta^\Delta(\sigma(s)-t)$) is the same as $\delta(s-\rho(t))$.

   \begin{remark} Below are some special cases of the above Definitions \ref{Ab iterated} and \ref{rAb iterated}, which reflect their appropriateness as definitions of a two-parameter model of fractional calculus.

   \begin{itemize}
\item When $\mu=n\in\mathbb{N}$, we recover the expression derived above for iterating the AB fractional sum $n$ times. In particular, when $\mu=1$ we have
     \begin{align*}
     \prescript{AB}{a}\nabla^{(-\alpha,1)}f(t)&=\prescript{AB}{a}\nabla^{-\alpha}f(t); \\
     \prescript{AB}{}\nabla_b^{(-\alpha,1)}f(t)&=\prescript{AB}{}\nabla_b^{-\alpha}f(t).
     \end{align*}
\item When $\alpha=0$, we recover the function $f(t)$ itself: \[\prescript{AB}{a}\nabla^{(0,\mu)}f(t)=\prescript{AB}{}\nabla_b^{(0,\mu)}f(t)=f(t).\]
\item  When $\mu=-1$ we have
     \begin{align}
      \label{r11} \prescript{AB}{a}\nabla^{(-\alpha,-1)}f(t)&=\prescript{ABR}{a}\nabla^{\alpha} f(t); \\
      \label{r12} \prescript{AB}{}\nabla_b^{(-\alpha,-1)}f(t)&=\prescript{ABR}{}\nabla_b^\alpha f(t).
     \end{align}
     More generally, for $\mu=-n$ we have
     \begin{align}
     \label{r21} \prescript{AB}{a}\nabla^{(-\alpha,-n)}f(t)&=\left(\prescript{ABR}{a}\nabla^{\alpha}\right)^n f(t); \\
     \label{r22} \prescript{AB}{}\nabla_b^{(-\alpha,-n)}f(t)&=\left(\prescript{ABR}{}\nabla_b^\alpha\right)^nf(t).
     \end{align}
     For a proof of this, see Theorem \ref{iterated:ABR} below.
\item In the case $\alpha\rightarrow1$ we have the following conventions:
     \begin{align*}
     \prescript{AB}{a}\nabla^{(-1,\mu)}f(t)&=\left(\prescript{}{a}\nabla^{-1}f(t)\right)^\mu=\prescript{}{a}\nabla^{-\mu}f(t); \\
     \prescript{AB}{a}\nabla^{(-1,-\mu)}f(t)&=\prescript{}{a}\nabla^{\mu}f(t).
     \end{align*}
   \end{itemize}
   \end{remark}

\begin{theorem} \label{iterated:ABR}
For any $a<b$ in $\mathbb{R}$, $\alpha\in(0,1)$, and $n\in\mathbb{N}$,  the identities \eqref{r21} and \eqref{r22} are valid.
\end{theorem}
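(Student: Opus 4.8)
The plan is to prove the left identity \eqref{r21} by induction on $n$, and then obtain the right identity \eqref{r22} by the same argument with \eqref{semi nablar} replacing \eqref{semi nablal}. Throughout I abbreviate $J:=\prescript{}{a}\nabla^{-\alpha}$ and $\lambda:=\tfrac{-\alpha}{1-\alpha}$, reading the semigroup property \eqref{semi nablal} as $J^kJ^\ell=J^{k+\ell}$, i.e.\ $J^k=\prescript{}{a}\nabla^{-k\alpha}$, with $\prescript{}{a}\nabla^{-0}=I$. The conceptual content is that Lemma \ref{TQ} expresses the Riemann--Liouville difference as the operator series $\prescript{ABR}{a}\nabla^{\alpha}=\tfrac{B(\alpha)}{1-\alpha}\sum_{k=0}^\infty\lambda^kJ^k=\tfrac{B(\alpha)}{1-\alpha}(I-\lambda J)^{-1}$, so that formally $\left(\prescript{ABR}{a}\nabla^{\alpha}\right)^n=\left(\tfrac{B(\alpha)}{1-\alpha}\right)^n(I-\lambda J)^{-n}$, and the generalised binomial theorem applied to $(I-\lambda J)^{-n}$ should reproduce exactly the coefficients appearing in \eqref{iterated AB diff} with $\mu=n$. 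The induction below is just the bookkeeping that makes this expansion rigorous one power at a time.

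For the base case $n=1$ I would first record \eqref{r11}: substituting $\mu=1$ into the series in \eqref{iterated AB diff} and using $\binom{-1}{k}=(-1)^k$ turns the coefficient of $\prescript{}{a}\nabla^{-k\alpha}$ into $\tfrac{B(\alpha)}{1-\alpha}\lambda^k$, which is precisely the series \eqref{bbb} of Lemma \ref{TQ}. For the inductive step, assume
\[
\left(\prescript{ABR}{a}\nabla^{\alpha}\right)^n=\prescript{AB}{a}\nabla^{(-\alpha,-n)}=\sum_{k=0}^\infty c_k^{(n)}J^k,\qquad c_k^{(n)}=\frac{\binom{-n}{k}B(\alpha)^n\alpha^k}{(1-\alpha)^{n+k}}.
\]
Composing on the left with $\prescript{ABR}{a}\nabla^{\alpha}=\tfrac{B(\alpha)}{1-\alpha}\sum_j\lambda^jJ^j$ and collapsing $J^jJ^k=J^{j+k}$ via \eqref{semi nablal}, I collect the coefficient of $J^m$ as a Cauchy product and reduce it, after extracting the common factor $\tfrac{B(\alpha)^{n+1}\alpha^m}{(1-\alpha)^{n+1+m}}$, to $\sum_{j=0}^m(-1)^j\binom{-n}{m-j}$.

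The algebraic heart is then the Chu--Vandermonde identity: writing $(-1)^j=\binom{-1}{j}$, one has $\sum_{j=0}^m\binom{-1}{j}\binom{-n}{m-j}=\binom{-(n+1)}{m}$, so the coefficient of $J^m$ becomes exactly $c_m^{(n+1)}$. Hence the composed operator equals $\prescript{AB}{a}\nabla^{(-\alpha,-(n+1))}$ as defined by \eqref{iterated AB diff} with $\mu=n+1$, which closes the induction and proves \eqref{r21}. Replacing $J$ by $\nabla_b^{-\alpha}$, \eqref{semi nablal} by \eqref{semi nablar}, and \eqref{iterated AB diff} by \eqref{iterated AB diffr} yields \eqref{r22} verbatim.

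I expect the main obstacle to be analytic rather than combinatorial: justifying the rearrangement of the double series into the single Cauchy product requires absolute convergence of $\sum_{j,k}\lambda^jc_k^{(n)}\,J^{j+k}f(t)$. For each fixed $t$ the fractional-sum $J^{j+k}f(t)$ is a finite sum over $s$, and the tail in $k$ is governed by the factor $\bigl(\tfrac{\alpha}{1-\alpha}\bigr)^k$ against the polynomial growth of $\binom{-n}{k}$ and of $(t-\rho(s))^{\overline{k\alpha-1}}/\Gamma(k\alpha)$; this is summable precisely in the regime $0<\alpha<\tfrac12$ in which Lemma \ref{TQ} and Definition \ref{dABDdef} guarantee $|\lambda|<1$, and there the interchange is legitimate by dominated convergence. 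Outside that regime the stated range $\alpha\in(0,1)$ should be understood at the level of formal operator series, the Chu--Vandermonde step being unaffected by convergence considerations.
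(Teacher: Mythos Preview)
Your proof is correct and rests on the same two ingredients as the paper's: Lemma~\ref{TQ} for the base case $n=1$, and the Vandermonde identity $\sum_{j}\binom{-1}{j}\binom{-n}{m-j}=\binom{-(n+1)}{m}$ for passing to general $n$. The organisational difference is that the paper proves only $n=1$ explicitly and then appeals forward to the general semigroup property (Theorem~\ref{semi}), whose proof is exactly your Cauchy-product-plus-Vandermonde computation carried out for arbitrary $\mu,\nu$; you instead inline the special case $(\mu,\nu)=(-1,-n)$ of that argument inside an induction. Your version is therefore self-contained and avoids the forward reference, at the cost of reproving a fragment of Theorem~\ref{semi}; the paper's version is more modular. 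Your remarks on absolute convergence and the restriction $0<\alpha<\tfrac12$ are also appropriate, and indeed more careful than the paper, since Lemma~\ref{TQ} is only stated in that range.
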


\begin{proof}
We prove the result in the special case $n=1$, i.e. the equations \eqref{r11} and \eqref{r12}. The general result follows from these two equations together with the semigroup property proved in the next section (Theorem \ref{semi}). By symmetry, we consider only \eqref{r11}.

Substituting $\mu=1$ in the representation \eqref{iterated AB diff}, we find:
     \begin{equation*}
        \prescript{AB}{a}\nabla^{(-\alpha,-1)}f(t) =\sum_{k=0}^\infty \frac{\binom{-1}{k}B(\alpha)^1 \alpha^k}{ (1-\alpha)^{1+k}}\prescript{}{a}\nabla^{-k\alpha}f(t).
     \end{equation*}
Since we know that $\binom{-1}{k}=(-1)^k$ and $~_{a}\nabla^{-0}f(t)=f(t)$, this expression becomes:
    \begin{equation*}
    \prescript{AB}{a}\nabla^{(-\alpha,-1)}f(t) = \frac{B(\alpha)}{1-\alpha}\left[f(t)+\sum_{k=1}^\infty \left(\frac{-\alpha}{1-\alpha}\right)^k \left(\prescript{}{a}\nabla^{-\alpha k} f\right)(t)\right].
    \end{equation*}
And Lemma \ref{TQ} tells us that the right-hand side of the final equation is precisely $\prescript{ABR}{a}\nabla^\alpha f(t)$, as required.
\end{proof}

   \begin{remark}
   For the sake of comparisons in this sequel, we invite the reader to check \cite{Thbinomial} about fractional sums and differences with binomial coefficients.
   \end{remark}
   
   \begin{example}
As an illustrative example, we apply the operators introduced in Definitions \ref{Ab iterated} and \ref{rAb iterated} to some simple functions, as follows:
\begin{align*}
\prescript{AB}{a}\nabla^{(-\alpha,\mu)}(t-a)^{\overline{\gamma-1}}&=\sum_{k=0}^\infty \frac{\binom{\mu}{k}(1-\alpha)^{\mu-k} \alpha^k}{B(\alpha)^\mu}\prescript{}{a}\nabla^{-k\alpha}(t-a)^{\overline{\gamma-1}} \\
&=\sum_{k=0}^\infty \frac{\binom{\mu}{k}(1-\alpha)^{\mu-k} \alpha^k}{B(\alpha)^\mu}\cdot\frac{\Gamma(\gamma)}{\Gamma(\gamma+k\alpha)}(t-a)^{\overline{\gamma+k\alpha-1}}; \\
\prescript{AB}{}\nabla_b^{(-\alpha,\mu)}(b-t)^{\overline{\gamma-1}}&=\sum_{k=0}^\infty \frac{\binom{\mu}{k}(1-\alpha)^{\mu-k} \alpha^k}{B(\alpha)^\mu}\nabla_b^{-k\alpha}(b-t)^{\overline{\gamma-1}} \\
&=\sum_{k=0}^\infty \frac{\binom{\mu}{k}(1-\alpha)^{\mu-k} \alpha^k}{B(\alpha)^\mu}\cdot\frac{\Gamma(\gamma)}{\Gamma(\gamma+k\alpha)}(b-t)^{\overline{\gamma+k\alpha-1}}.
\end{align*}
Note that we have used here the following facts \cite[Lemma 3.3]{ThFer}, \cite[Proposition 3.8]{dualR}:
\begin{align*}
\prescript{}{a}\nabla^{-\alpha}(t-a)^{\overline{\gamma-1}}&=\frac{\Gamma(\gamma)}{\Gamma(\gamma+\alpha)}(t-a)^{\overline{\alpha+\gamma-1}}; \\
\nabla_b^{-\alpha}(b-t)^{\overline{\gamma-1}}&=\frac{\Gamma(\gamma)}{\Gamma(\gamma+\alpha)}(b-t)^{\overline{\gamma+\alpha-1}}.
\end{align*}
   \end{example}

\subsection{Fundamental properties}

In this section we prove some important properties of the definition proposed in the previous section, which demonstrate its naturality and usefulness.

\begin{theorem}[Nabla discrete Laplace transforms in the iterated AB model]
Let $\alpha, \mu$ and $f$ be as in Definition \ref{Ab iterated}. Then, we have
\begin{equation}\label{transf}
\left( \mathcal{K}_a~^{AB}_{a}\nabla^{(-\alpha,\mu)}f\right)(z)= \left(\frac{1-\alpha}{B(\alpha)} + \frac{\alpha}{B(\alpha)}z^{-\alpha} \right)^\mu(\mathcal{K}_af)(z).
\end{equation}
\end{theorem}

\begin{proof}
We use the Definition \ref{Ab iterated} of the iterated AB fractional difference, and the Lemma \ref{F} concerning the nabla discrete Laplace transform of fractional difference operators.

\begin{align*}
\left( \mathcal{K}_a\prescript{AB}{a}\nabla^{(-\alpha,\mu)}f\right)(z)&= \sum_{k=0}^\infty \frac{\binom{\mu}{k}(1-\alpha)^{\mu-k} \alpha^k}{B(\alpha)^\mu}(\mathcal{K}_a\prescript{}{a}\nabla^{-k\alpha}f)(z) \\
   &= \sum_{k=0}^\infty \frac{\binom{\mu}{k}(1-\alpha)^{\mu-k} \alpha^k}{B(\alpha)^\mu} z^{-\alpha k}(\mathcal{K}_af)(z) \\
   &=   \sum_{k=0}^\infty \frac{\binom{\mu}{k}(1-\alpha)^{\mu-k} (z^{-\alpha}\alpha)^k}{B(\alpha)^\mu} (\mathcal{K}_af)(z) \\
   &=\left(\frac{1-\alpha}{B(\alpha)} + \frac{\alpha}{B(\alpha)}z^{-\alpha} \right)^\mu(\mathcal{K}_af)(z),
\end{align*}
where in the last step the binomial theorem is applied.
\end{proof}

\begin{theorem}[The semigroup property] \label{semi}
Let $\alpha \in [0,1]$ and $\mu, \nu \in \mathbb{R}$ and $f$ be a function defined on $\mathbb{N}_a$. Then the left and right iterated AB fractional difference
operators each have the semigroup property in $\mu$. Namely, for any $t \in \mathbb{N}_a$ we have
\begin{equation}\label{semiL}
  \prescript{AB}{a}\nabla^{(-\alpha, \mu)}\prescript{AB}{a}\nabla^{(-\alpha, \nu)}f(t)=\prescript{AB}{a}\nabla^{(-\alpha, (\mu+\nu))}f(t),
\end{equation}
and for any $t \in {}_b\mathbb{N}$ we have
\begin{equation}\label{semiR}
  \prescript{AB}{}\nabla_b^{(-\alpha, \mu)}\prescript{AB}{}\nabla_b^{(-\alpha, \nu)}f(t)=\prescript{AB}{}\nabla_b^{(-\alpha, (\mu+\nu))}f(t),
\end{equation}
\end{theorem}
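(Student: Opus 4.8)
The plan is to exploit the fact that, by the semigroup property \eqref{semi nablal} of the ordinary nabla fractional sums, the iterated operator is literally a binomial power of the single operator $\prescript{}{a}\nabla^{-\alpha}$. Writing $c_0=\frac{1-\alpha}{B(\alpha)}$, $c_1=\frac{\alpha}{B(\alpha)}$ and $A=\prescript{}{a}\nabla^{-\alpha}$, and recalling that $A^k=\prescript{}{a}\nabla^{-k\alpha}$ with $A^0=\mathrm{id}$ (since $\prescript{}{a}\nabla^{-0}f=f$), Definition \ref{Ab iterated} reads
\[
\prescript{AB}{a}\nabla^{(-\alpha,\mu)}f=\sum_{k=0}^\infty \binom{\mu}{k}c_0^{\mu-k}c_1^{k}A^{k}f.
\]
Thus the whole statement amounts to showing that the formal symbol $(c_0+c_1X)^{\mu}$ behaves multiplicatively under composition, which is exactly what Vandermonde's convolution encodes.

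First I would substitute the two inner expansions into the composition and use $A^{j}A^{k}=A^{j+k}$, itself a direct consequence of \eqref{semi nablal}, to obtain the double series
\[
\prescript{AB}{a}\nabla^{(-\alpha,\mu)}\prescript{AB}{a}\nabla^{(-\alpha,\nu)}f=\sum_{j=0}^\infty\sum_{k=0}^\infty \binom{\mu}{j}\binom{\nu}{k}c_0^{(\mu+\nu)-(j+k)}c_1^{j+k}A^{j+k}f.
\]
Next I would reindex by the total power $m=j+k$, collecting the inner sum $\sum_{j=0}^{m}\binom{\mu}{j}\binom{\nu}{m-j}$, which by the generalized Vandermonde identity (valid for arbitrary real $\mu,\nu$, since comparing coefficients of $x^m$ in $(1+x)^\mu(1+x)^\nu=(1+x)^{\mu+\nu}$ gives a polynomial identity holding for all nonnegative integers $m$) equals $\binom{\mu+\nu}{m}$. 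This collapses the double series to $\sum_{m=0}^\infty \binom{\mu+\nu}{m}c_0^{(\mu+\nu)-m}c_1^{m}A^{m}f$, which is precisely $\prescript{AB}{a}\nabla^{(-\alpha,\mu+\nu)}f$, establishing \eqref{semiL}. The right-type identity \eqref{semiR} then follows verbatim, replacing $A$ by $\nabla_b^{-\alpha}$ and invoking the right semigroup property \eqref{semi nablar}.

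The step I expect to be the main obstacle is the Fubini-type rearrangement of the double series into the single sum indexed by $m$, which requires absolute convergence of the double series. Estimating the general term by controlling $\frac{(t-\rho(s))^{\overline{k\alpha-1}}}{\Gamma(\alpha k)}$ against $\left(\frac{\alpha}{1-\alpha}\right)^{k}$ shows that the relevant series are dominated by a geometric factor $\left(\frac{\alpha}{1-\alpha}\right)^{j+k}$ times polynomial growth in $j$ and $k$; this is summable exactly when $\frac{\alpha}{1-\alpha}<1$, i.e.\ $\alpha<\tfrac12$, matching the convergence condition already recorded after Definition \ref{dABDdef}, and on that range it legitimizes both the reindexing and the appeal to Vandermonde. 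As an independent cross-check I would note that the same conclusion follows transparently from the Laplace-transform formula \eqref{transf}: applying $\mathcal{K}_a$ to each side multiplies the symbols, yielding $\left(\frac{1-\alpha}{B(\alpha)}+\frac{\alpha}{B(\alpha)}z^{-\alpha}\right)^{\mu+\nu}(\mathcal{K}_af)(z)$ in both cases, so injectivity of the nabla discrete Laplace transform recovers the identity at once.
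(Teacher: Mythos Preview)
Your proof is correct and follows essentially the same route as the paper: expand both operators via Definition \ref{Ab iterated}, use the semigroup property \eqref{semi nablal} (resp.\ \eqref{semi nablar}) of the ordinary nabla fractional sums to combine $\prescript{}{a}\nabla^{-j\alpha}\prescript{}{a}\nabla^{-k\alpha}$ into $\prescript{}{a}\nabla^{-(j+k)\alpha}$, reindex by $m=j+k$, and collapse the inner sum via the Vandermonde identity $\sum_{k=0}^m\binom{\mu}{k}\binom{\nu}{m-k}=\binom{\mu+\nu}{m}$. Your additional remarks on absolute convergence for the Fubini step and the Laplace-transform cross-check go beyond what the paper actually records, but they are sound and do not alter the argument.
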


\begin{proof}
Using the fact \eqref{semi nablal} that the left fractional sums have the semigroup property, and by the help of the identity \[\sum_{k=0}^m \binom{\mu}{k}\binom{\nu}{m-k}= \binom{\mu+\nu}{m},\] we have
\begin{align*}
&\prescript{AB}{a}\nabla^{(-\alpha, \mu)}\prescript{AB}{a}\nabla^{(-\alpha, \nu)}f(t) \\
&\hspace{1cm}=\sum_{k=0}^\infty \frac{\binom{\mu}{k}(1-\alpha)^{\mu-k} \alpha^k}{B(\alpha)^\mu}\prescript{}{a}\nabla^{-k\alpha}\left[\sum_{n=0}^\infty \frac{\binom{\nu}{n}(1-\alpha)^{\nu-n} \alpha^n}{B(\alpha)^\nu}\prescript{}{a}\nabla^{-n\alpha}f(t)\right] \\
&\hspace{1cm}=\sum_{k,n}\frac{\binom{\mu}{k}\binom{\nu}{n}(1-\alpha)^{\mu+\nu-k-n} \alpha^{k+n}}{B(\alpha)^{\mu+\nu}}\prescript{}{a}\nabla^{-k\alpha}\prescript{}{a}\nabla^{-n\alpha}f(t) \\
&\hspace{1cm}=\sum_{m=0}^\infty\sum_{k=0}^m\frac{\binom{\mu}{k}\binom{\nu}{m-k}(1-\alpha)^{\mu+\nu-m} \alpha^m}{B(\alpha)^{\mu+\nu}}\prescript{}{a}\nabla^{-m\alpha}f(t) \\
&\hspace{1cm}=\sum_{m=0}^\infty\frac{\binom{\mu+\nu}{m}(1-\alpha)^{\mu+\nu-m} \alpha^m}{B(\alpha)^{\mu+\nu}}\prescript{}{a}\nabla^{-m\alpha}f(t)=\prescript{AB}{a}\nabla^{(-\alpha, \mu+\nu)}f(t).
\end{align*}
This proves the left semigroup property \eqref{semiL}. The proof for \eqref{semiR} is similar, starting from \eqref{semi nablar} and using the same binomial identity.
\end{proof}

\begin{theorem}[Integration by parts] \label{IbP}
Let $\alpha\in[0,1]$ and $\mu\in\mathbb{R}$ and $a,b\in\mathbb{R}$ with $a\equiv b$ modulo $1$. For any function $f$ defined on $\mathbb{N}_a$ and $g$ defined on ${}_b\mathbb{N}$, we have the following integration by parts identity:
\begin{equation}
\sum_{s=a+1}^{b-1}g(s)\prescript{AB}{a}\nabla^{(-\alpha,\mu)}f(s)=\sum_{s=a+1}^{b-1}f(s)\prescript{AB}{}\nabla_b^{(-\alpha,\mu)}g(s).
\end{equation}
\end{theorem}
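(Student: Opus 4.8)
The plan is to reduce the iterated identity to a term-by-term integration-by-parts statement for the ordinary nabla fractional sums, and then to establish that single-order identity by interchanging the order of a finite double summation. The starting observation is that the left and right iterated operators in Definitions \ref{Ab iterated} and \ref{rAb iterated} are built from exactly the same coefficients $c_k := \frac{\binom{\mu}{k}(1-\alpha)^{\mu-k}\alpha^k}{B(\alpha)^\mu}$, differing only in whether the left sum $\prescript{}{a}\nabla^{-k\alpha}$ or the right sum $\nabla_b^{-k\alpha}$ is applied. Hence, assuming the $k$-series may be exchanged with the (finite) $s$-summation, it suffices to prove that for every integer $k\ge 0$,
\begin{equation*}
\sum_{s=a+1}^{b-1}g(s)\,\prescript{}{a}\nabla^{-k\alpha}f(s)=\sum_{s=a+1}^{b-1}f(s)\,\nabla_b^{-k\alpha}g(s),
\end{equation*}
after which multiplying by $c_k$ and summing over $k$ yields the claim. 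The case $k=0$ is immediate, since $\prescript{}{a}\nabla^{-0}f=f$ and $\nabla_b^{-0}g=g$, so both sides equal $\sum_s f(s)g(s)$.

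For the single-order identity with $\nu:=k\alpha>0$, I would substitute Definition \ref{fractional sums} into the left-hand side and write it as the double sum
\begin{equation*}
\frac{1}{\Gamma(\nu)}\sum_{z=a+1}^{b-1}\sum_{s=a+1}^{z}g(z)\,(z-\rho(s))^{\overline{\nu-1}}f(s),
\end{equation*}
whose index set is the triangle $a+1\le s\le z\le b-1$. The key step is then to interchange the order of summation (discrete Fubini/Dirichlet reordering), so that $s$ runs over $a+1,\dots,b-1$ and, for each fixed $s$, the variable $z$ runs over $s,\dots,b-1$. Collecting the inner sum and recognising it, via the definition of $\nabla_b^{-\nu}$ in Definition \ref{fractional sums}, as exactly $\Gamma(\nu)\,\nabla_b^{-\nu}g(s)$, produces $\sum_{s=a+1}^{b-1}f(s)\,\nabla_b^{-\nu}g(s)$, which is the right-hand side. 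The hypothesis $a\equiv b \pmod 1$ is what guarantees that $\{a+1,\dots,b-1\}$ is a common integer-spaced grid on which both $f$ and $g$, and hence both sides, are simultaneously defined.

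The main obstacle I anticipate is not the reordering itself, which is a routine finite rearrangement, but the justification for passing the infinite $k$-series through the finite $s$-summations. Because the domain $\{a+1,\dots,b-1\}$ is finite (a consequence of $b-a\in\mathbb{N}$), each fractional sum $\prescript{}{a}\nabla^{-k\alpha}f(s)$ is itself a finite sum, so I would bound the terms uniformly in $s$ and appeal to absolute convergence of the series $\sum_k c_k\,\prescript{}{a}\nabla^{-k\alpha}f(s)$ at each of the finitely many points $s$; this permits the interchange of $\sum_k$ and $\sum_s$ and legitimises the term-by-term argument. Finally, since the single definition in Definition \ref{Ab iterated} covers all real $\mu$ through one series, the very same computation handles the difference operators (the case $\mu<0$) without any modification, so no separate argument is required there.
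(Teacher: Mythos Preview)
Your proposal is correct and follows essentially the same approach as the paper: expand the iterated operator via its series definition, interchange the finite $s$-sum with the $k$-series, and apply the single-order identity $\sum_{s=a+1}^{b-1}g(s)\,\prescript{}{a}\nabla^{-k\alpha}f(s)=\sum_{s=a+1}^{b-1}f(s)\,\nabla_b^{-k\alpha}g(s)$ term by term before reassembling the right-hand side. The only differences are that the paper cites this single-order identity from \cite[Theorem~4.1]{ThFer} rather than re-deriving it by the Fubini/reordering argument you sketch, and the paper does not explicitly justify the interchange of the $k$-series with the $s$-sum that you address via finiteness of the domain.
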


\begin{proof}
We know from \cite[Theorem 4.1]{ThFer} the following integration by parts identity for standard fractional difference-sum operators:
\begin{equation}
\label{IbP:orig}
\sum_{s=a+1}^{b-1}g(s)\prescript{}{a}\nabla^{-\alpha}f(s)=\sum_{s=a+1}^{b-1}f(s)\nabla_b^{-\alpha}g(s).
\end{equation}
Using respectively Definition \ref{Ab iterated}, equation , and Definition \ref{rAb iterated}, we have
\begin{align*}
\sum_{s=a+1}^{b-1}g(s)\prescript{AB}{a}\nabla^{(-\alpha,\mu)}f(s)&=\sum_{s=a+1}^{b-1}g(s)\sum_{k=0}^\infty \frac{\binom{\mu}{k}(1-\alpha)^{\mu-k} \alpha^k}{B(\alpha)^\mu}\prescript{}{a}\nabla^{-k\alpha}f(s) \\
&=\sum_{k=0}^\infty \frac{\binom{\mu}{k}(1-\alpha)^{\mu-k} \alpha^k}{B(\alpha)^\mu}\sum_{s=a+1}^{b-1}g(s)\prescript{}{a}\nabla^{-k\alpha}f(s) \\
&=\sum_{k=0}^\infty \frac{\binom{\mu}{k}(1-\alpha)^{\mu-k} \alpha^k}{B(\alpha)^\mu}\sum_{s=a+1}^{b-1}f(s)\nabla_b^{-k\alpha}g(s) \\
&=\sum_{s=a+1}^{b-1}f(s)\sum_{k=0}^\infty \frac{\binom{\mu}{k}(1-\alpha)^{\mu-k} \alpha^k}{B(\alpha)^\mu}\nabla_b^{-k\alpha}g(s) \\
&=\sum_{s=a+1}^{b-1}f(s)\prescript{AB}{}\nabla_b^{(-\alpha,\mu)}g(s).
\end{align*}
\end{proof}

Integration by parts identities as in Theorem \ref{IbP} are the main tool used to study discrete variational problems in the frame of iterated AB difference-sums \cite{TD ADE 2016,Thabetnew}.

\section{Fractional difference equations and applications} \label{s:3}

 Let $\alpha \in (0,1]$ and $\mu  \in \mathbb{R}^+$. Consider a general  fractional ordinary difference equation of the form
 \begin{equation}\label{eqt}
   \prescript{AB}{0}\nabla^{(-\alpha, -\mu)}x(t)= -A x(t)+b(t),~~A \in \mathbb{R}^+.
 \end{equation}

We search for a series solution, i.e. one of the form \[x(t)=\sum_{s=0}^\infty c_s t^{\overline{\alpha s}},\] where we assume the given function $b$ can be written as \[b(t)=\sum_{s=0}^\infty b_s ~t^{\overline{\alpha s}}.\]
 First, we substitute $x(t)$ into the left-hand side of \eqref{eqt} and utilise Lemma \ref{power nabla left and right} to get

 \begin{align*}
 \nonumber \prescript{AB}{0}\nabla^{(-\alpha, -\mu)}x(t) &= \sum_{k=0}^\infty \frac{\binom{-\mu}{k}
    (1-\alpha)^{-\mu-k}}{B(\alpha)^{-\mu}}\prescript{}{0}\nabla^{-\alpha k}\left(\sum_{i=0}^\infty c_i t^{\overline{i \alpha}}\right) \\
 \nonumber  &= \sum_{k=0}^\infty\sum_{i=0}^\infty \frac{\binom{-\mu}{k}
    (1-\alpha)^{-\mu-k}}{B(\alpha)^{-\mu}}~c_i\frac{\Gamma(i\alpha+1)}{\Gamma((k +i)\alpha+1)}t^{\overline{(i+k)\alpha}}\\
    \begin{split}
    &=  \sum_{m=0}^\infty \frac{t^{\overline{m \alpha}}}{B(\alpha)^{-\mu} \Gamma(m \alpha+1)}\sum_{k=0}^m \\
    &\hspace{3cm} c_{m-k}\binom{-\mu}{k}(1-\alpha)^{-\mu-k}\Gamma((m-k)\alpha+1),
    \end{split}
 \end{align*}
 where we have set $m=k+i$ in the last line. On the other hand, the right hand side of \eqref{eqt} can be written as \[\sum_{m=0}^\infty [-A c_m+b_m]t^{\overline{\alpha m}}.\]
Now, if we equate the coefficients in these two infinite series, we reach the identity
\begin{equation}\label{id1}
  \frac{1}{\Gamma(m \alpha+1)} \sum_{k=0}^m \frac{c_{m-k} \binom{-\mu}{k} B(\alpha)^\mu \alpha^k \Gamma((m-k)\alpha+1)}  {(1-\alpha)^{\mu+k}}=-Ac_m+b_m,\quad m\in\mathbb{N}_0.
\end{equation}
Solving for $m=0$, we have \[c_0=\frac{b_0}{A+\left(\frac{B(\alpha)}{1-\alpha}\right)^\mu}.\]
For positive $m$, the identity \eqref{id1} will yield
\begin{equation}\label{id2}
  c_m= \frac{b_m}{A +\left(\frac{B(\alpha)}{1-\alpha}\right)^\mu}-\sum_{k=1}^\infty \frac{c_{m-k} \binom{-\mu}{k} \alpha^k B(\alpha)^\mu  \Gamma((m-k)\alpha+1)}   {(1-\alpha)^{\mu+k} \Gamma(m\alpha+1)\Gamma(m\alpha+1)\left(A+\left(\frac{B(\alpha)}{1-\alpha}\right)^\mu \right)}.
\end{equation}
Hence, we obtain the following solution:
\begin{equation}\label{sol}
  x(t)=\frac{b(t)}{A+\left(\frac{B(\alpha)}{1-\alpha}\right)^\mu}-\sum_{m=1}^\infty  t^{\overline{m \alpha}} \sum_{k=1}^m \frac{c_{m-k} \binom{-\mu}{k}\alpha^kB(\alpha)^\mu \Gamma((m-k)\alpha+1)}
  {(1-\alpha)^{\mu+k}\Gamma(m\alpha+1)\left(A+\left(\frac{B(\alpha)}{1-\alpha}\right)^\mu\right)}.
\end{equation}
Actually, the coefficients $c_i,~i=1,2,3,...$ can be calculated recursively from \eqref{id2}.

\begin{remark}
Of special interest is the particular case $\mu=1$ of the above problem. In this case, the solution representation \eqref{sol} becomes
\[x(t)=\frac{b(t)}{A+\left(\frac{B(\alpha)}{1-\alpha}\right)}-\sum_{m=1}^\infty  t^{\overline{m \alpha}} \sum_{k=1}^m \frac{c_{m-k} (-1)^k\alpha^kB(\alpha) \Gamma((m-k)\alpha+1)}
  {(1-\alpha)^{1+k}\Gamma(m\alpha+1)\left(A+\frac{B(\alpha)}{1-\alpha}\right)},\]
this being a solution of the fractional difference equation
\[\prescript{ABR}{0}\nabla^\alpha x(t)=-Ax(t)+b(t),\] where $b(t)=\sum_{s=0}^\infty b_s ~t^{\overline{\alpha s}}$ and $c_0=\frac{b_0}{A+(\frac{B(\alpha)}{1-\alpha})}$ and the coefficients $c_i$, $i\in\mathbb{N},$ can be determined from (\ref{id2}) with $\mu=1$.
\end{remark}

\begin{remark}
It is worth noting that the semigroup property of Theorem \ref{semi} will be invaluable in the further study of fractional difference equations in the discrete iterated AB model. There are many classes of difference equations 
which are easy to solve when we have a semigroup property but difficult or impossible when we do not \cite{Goodrich,dualCaputo,dualR}. Equipped with a semigroup property, we can easily cancel operators, in order to simplify and solve an equation, by applying new difference-sum operators to both left and right sides of the equation.
\end{remark}

\section{Conclusions} \label{s:4}

In this paper, we have introduced a new kind of discrete fractional calculus, whose advantages over existing models can be summarised as follows.
\begin{itemize}
\item \textbf{Atangana--Baleanu} fractional calculus, in the continuous case, has discovered many applications in modelling fractional systems with non-local and non-singular dynamics, behaviour that cannot be modelled using the classical Riemann--Liouville kernels. In a short space of time, the AB formula has established itself as a major competitor among the many different approaches to fractional calculus.

\item \textbf{Discrete} fractional calculus (DFC) is a whole different field of research from continuous fractional calculus. Like the original discrete calculus and difference equations, DFC can be used to study many real-world processes whose behaviour is too discrete to be well modelled by continuous fractional calculus.

\item In any type of calculus -- discrete or continuous, integer-order or fractional -- an important question is whether or not a \textbf{semigroup property} is satisfied. If we apply the operator twice with order $\alpha$, do we get the same result as by applying it once with order $2\alpha$? This fundamental issue has given rise to much debate about the validity of certain fractional models, and several new models have been proposed purely in order to regain a semigroup property.
\end{itemize}

Our definition gives a unique way of combining the particular structure of the AB formula, the discrete behaviour of DFC, and the semigroup property thanks to the introduction of a second parameter. Previous literature covers the combination of any two of these three (the discrete AB model, the continuous iterated AB model, and discrete models with semigroup properties), but this is the first time that all three have been put together.

In this paper, we covered some basic properties and examples of our new fractional difference-sum operator. By analysing the effect on it of the discrete Laplace transform, we also demonstrated how it can be used to solve a certain family of fractional difference equations.

Numerical methods have been an important part of the recent development of fractional calculus, including in the Atangana--Baleanu model \cite{djida-atangana-area,owolabi,yadav-pandey-shukla,ucar-ucar-ozdemir-hammouch}. Discrete calculus is often better suited to numerical schemes than the continuous case, due to the finite structure of computation there \cite{lakshmikantham-trigiante}. Thus, we expect that any new progress in discrete fractional calculus of AB type should have ramifications in numerical analysis, although to explore such ramifications would be beyond the scope of the current paper.

\section*{Acknowledgements}
The first author would like to thank Prince Sultan University for funding this work through research group Nonlinear Analysis Methods in Applied Mathematics (NAMAM)  group number RG-DES-2017-01-17. The second author would like to thank the Engineering and Physical Sciences Research Council (EPSRC) for their support in the form of a research student grant.


\end{document}